\newcommand{\bb}{\textbf}
\newcommand{\ol}{\overline}
\newcommand{\mc}{\mathcal}
\newcommand{\wh}{\widehat}
\newcommand{\mf}{\mathfrak}
\newcommand{\ms}{\mathscr}
\renewcommand{\AA}{\mathbb{A}}
\renewcommand{\SS}{\mathbb{S}}
\newcommand{\ZZ}{\mathbb{Z}}
\newcommand{\GG}{\mathbb{G}}
\newcommand{\QQ}{\mathbb{Q}}
\newcommand{\NN}{\mathbb{N}}
\newcommand{\FF}{\mathbb{F}}
\DeclareMathOperator{\diag}{diag}
\DeclareMathOperator{\im}{im}
\DeclareMathOperator{\Span}{Span}
\DeclareMathOperator{\Gl}{Gl}
\DeclareMathOperator{\GCD}{GCD}
\DeclareMathOperator{\LCM}{LCM}
\DeclareMathOperator{\ord}{ord}
\DeclareMathOperator{\Gal}{Gal}
\DeclareMathOperator{\End}{End}
\DeclareMathOperator{\Jac}{Jac}
\DeclareMathOperator{\GSp}{GSp}
\DeclareMathOperator{\Sp}{Sp}
\DeclareMathOperator{\PSp}{PSp}
\DeclareMathOperator{\divv}{div}
\DeclareMathOperator{\Pic}{Pic}
\theoremstyle{plain}
\newtheorem{Theorem}{Theorem}[section]
\newtheorem{Lemma}[Theorem]{Lemma}
\newtheorem{Corollary}[Theorem]{Corollary}
\newtheorem{Proposition}[Theorem]{Proposition}
\newtheorem{Question}[Theorem]{Question}
\newtheorem{Setup}[Theorem]{Setup}
\theoremstyle{definition}
\newtheorem{Definition}[Theorem]{Definition}
\newtheorem{Example}[Theorem]{Example}
\theoremstyle{remark}
\numberwithin{equation}{section}
\begin{document}
	
	\title[Products of hyperelliptic Jacobians with maximal Galois image]{Products of hyperelliptic Jacobians\\ with maximal Galois image}
	\author[J. Garnek]{J\k{e}drzej Garnek}
	\address{Faculty of Mathematics and Computer Science\\
	Uniwersytetu Pozna\'{n}skiego~4, 61-614 Poznan, Poland}
	\email{jgarnek@amu.edu.pl}
	\subjclass[2020]{Primary 14H40, Secondary 11F80}
	\keywords{}
	\date{}
	
	\begin{abstract}
		In this note we study the associated adelic representation of a product of hyperelliptic Jacobians.
		We give a simple criterion that ensures that this representation
		has maximal Galois image in a certain sense. As an application, we provide a method of constructing
		products of Jacobians with division fields as disjoint as they can be.
	\end{abstract}
	
	\maketitle
	\bibliographystyle{plain}
	
	\section{Introduction}
	Let $A$ be a principally polarized abelian variety of dimension $g$ over $\QQ$. The Galois action on the torsion points of $A$ gives rise to $\ell$-adic representations $\rho_{A, \ell} : \Gal(\ol{\QQ}/\QQ) \to \GSp_{2g}(\ZZ_{\ell})$
	for every prime $\ell$ and an adelic representation
	\[
		\rho_A :  \Gal(\ol{\QQ}/\QQ) \to \GSp_{2g}(\wh{\ZZ}).
	\]
	Studying the images of $\rho_A$ and $\rho_{A, \ell}$ is a classical and well-studied topic.
	A classical result of Serre (cf. \cite[Theorem 3]{SerreOeurvesIV}) states that if $g \in \{ 2, 6 \}$ or $2 \nmid g$ and $\End_{\ol{\QQ}}(A) = \ZZ$, then 
	the image of $\rho_A$ is open in $\GSp_{2g}(\wh{\ZZ})$. In particular, $\rho_{A, \ell}$ is surjective for
	sufficiently large primes $\ell$. In general, the behaviour of $\rho_{A, \ell}$ is predicted by the Mumford--Tate conjecture (see \cite{Farfan_Survey_HTMTC} for a survey).\\
	
	It seems natural to ask whether $\rho_A$ can be surjective. This is possible if $g \ge 3$, cf.
	\cite[Corollary~1.3]{Landesman_Lombardo_Surjectivity_Rational_Families}. 
	However, for many classes of abelian varieties there are some limitations
	on the size of the image of $\rho_A$. For example,
	if $A$ is a product of $n$ elliptic curves, then $\im \rho_A \subset \Gl^{\Delta}(\wh{\ZZ}) := \{ (A_1, \ldots, A_n) \in \prod_{i = 1}^n \Gl_2(\wh{\ZZ})
	: \det A_1 = \ldots = \det A_n \}$ and moreover:
	\begin{equation} \label{eqn:maximal_EC}
		[\Gl^{\Delta}(\wh{\ZZ}) : \im \rho_A] \ge 2^n
	\end{equation}
	(this was observed by Serre, cf. \cite[Proposition~22]{Serre_proprietes}). In particular, $\rho_A$ in this case cannot be surjective. If equality in~\eqref{eqn:maximal_EC} holds for a product $A$ of elliptic curves, we say that $A$ has \emph{maximal Galois image}. Jones' results show that almost all products of elliptic curves over $\QQ$ have maximal Galois image, cf.~\cite{Jones_Almost_all_ECs_are_Serre} and \cite{Jones_Pairs_of_ECs}. The article~\cite{Landesman_Lombardo_Surjectivity_Rational_Families} 
	generalizes the notion of maximal Galois image to other families, and shows that almost all
	abelian varieties in a given family have maximal Galois image under some mild assumptions.
	We recall what it means to have maximal Galois image in the case of a product of hyperelliptic Jacobians in Section~\ref{sec:max_gal_image}, cf. Definition~\ref{def:max_gal_image}.
	It turns out that a generic hyperelliptic curve has maximal Galois image, cf. \cite{Landesman_Hyperelliptic_curves_maximal}.
	In particular, this implies that a product of generic hyperelliptic Jacobians has maximal Galois image. However, those results do not give an explicit method of constructing explicit products of hyperelliptic Jacobians with maximal Galois image. Such examples were constructed in the article~\cite{Anni_Dokchitser_Constructing_hyperelliptic}.
	The article \cite{Daniels_Hatley_Ricci_Elliptic_curves} yields a criterion for a product
	of elliptic curves to have maximal Galois image and also provides an explicit example.\\
	
	In this article we investigate the adelic representation of an abelian variety $J$, which is a product of hyperelliptic Jacobians
	and give an explicit criterion for the image of $\rho_J$ to be maximal in the sense of \cite{Landesman_Lombardo_Surjectivity_Rational_Families}.
	We use the following setup.
	\begin{Setup} \label{setup} 
		Let $f_i \in \ZZ[x]$ be separable polynomials of degrees $2g_i + 2$ and of discriminants $\Delta_i$ for $i = 1, \ldots, n$.
		Let $J_i$ be the Jacobian of the hyperelliptic curve with the affine part given by $y^2 = f_i(x)$ and denote
		$J := J_1 \times \ldots \times J_n$. Finally, let $g := g_1 + \ldots + g_n$ be the dimension of $J$.
	\end{Setup}
	The following is the main result of the paper.
	\begin{Theorem} \label{thm:main_theorem}
		Keep Setup~\ref{setup} and assume that $g_1, \ldots, g_n \ge 2$. Suppose also that:
		\begin{enumerate}
			\item $J_1, \ldots, J_n$ have maximal Galois image,
			
			\item there exist primes $\ell_1, \ldots, \ell_n > 2$ such that $\ord_{\ell_i}(\Delta_i) = 1$ for $i = 1, \ldots, n$ and $\ord_{\ell_i}(\Delta_j) = 0$ for $i \neq j$.
		\end{enumerate}
		Then $J$ has maximal Galois image.
	\end{Theorem}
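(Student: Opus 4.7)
The plan is to apply Goursat's lemma to the image $H := \im(\rho_J)$ inside $\prod_{i=1}^n \im(\rho_{J_i})$ and to show that $H$ is the largest subgroup compatible with the constraints defining maximal Galois image for a product of hyperelliptic Jacobians. Hypothesis~(1) already says that each projection $H \twoheadrightarrow \im(\rho_{J_i})$ is surjective, so the question becomes: what identifications does $H$ force between its factors, and are those only the ones built into the definition of maximality?

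There are two a priori unavoidable identifications between distinct $\im(\rho_{J_i})$: (a) the similitude character, which must coincide with the cyclotomic character on every factor; and (b) the mod-$2$ quadratic character $\chi_i$ cut out by $\QQ(\sqrt{\Delta_i})$, because the action on $J_i[2]$ factors through permutations of the roots of $f_i$ whose sign equals the action on $\sqrt{\Delta_i}$. Both are incorporated into Definition~\ref{def:max_gal_image}. A first preparatory step is to use~(2) to verify independence of the $\chi_i$: since $\ord_{\ell_i}(\Delta_i) = 1$ is odd while $\ord_{\ell_i}(\Delta_j)$ is even for $j \neq i$, only $\chi_i$ is ramified at $\ell_i$. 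Hence no nontrivial product of the $\chi_i$ is trivial, and $\QQ(\sqrt{\Delta_1}, \ldots, \sqrt{\Delta_n})/\QQ$ has degree $2^n$, so the discriminant characters impose no further relation.

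The main work --- and the point I expect to be the central obstacle --- is to rule out, prime by prime, any identification beyond (a) and (b). At an odd prime $\ell$, each $\im(\rho_{J_i,\ell})$ contains an open subgroup of $\GSp_{2g_i}(\ZZ_\ell)$ (this uses $g_i \ge 2$) whose only small continuous quotients come from the similitude character. Goursat's lemma then forces cross-factor glue only through the cyclotomic character, matching the desired fiber product. At $\ell = 2$ the situation is more delicate, since the additional characters $\chi_i$ appear and one must exclude mod-$2$ identifications beyond (b). Here hypothesis~(2) supplies the key local input: the inertia subgroup $I_{\ell_i}$ acts nontrivially on $J_i[2]$ --- as a single transposition of two roots of $f_i$ because $\ord_{\ell_i}(\Delta_i) = 1$ and $\ell_i > 2$, so $f_i$ has precisely one double root modulo $\ell_i$ --- while acting trivially on each $J_j$ with $j \neq i$ (good reduction, $\ell_i \nmid \Delta_j$). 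These asymmetric inertia elements populate $H$ with enough independent generators to kill any spurious identification beyond (a) and (b).

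Finally, once maximality is established at every prime, a standard argument upgrades the local statements to the adelic setting: the residual representations at different primes $\ell > 2$ are (essentially) simple modulo the similitude, and hence share no common quotient with the mod-$2$ image aside from those coming from the cyclotomic character. Combining this with the prime-by-prime Goursat analysis shows that the mod-$N$ image of $\rho_J$ equals the expected fiber product for every $N$, which is exactly the condition for $J$ to have maximal Galois image in the sense of Definition~\ref{def:max_gal_image}.
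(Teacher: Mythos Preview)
Your overall architecture---Goursat's lemma factor by factor, then lift to the adelic image---matches the paper's. However, there is a genuine gap in your odd-prime step, and the adelic upgrade is under-specified.

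At an odd prime $\ell$ you assert that the only quotients of $\im(\rho_{J_i,\ell})$ through which a Goursat identification can occur are ``small'' and come from the similitude character. This is not enough when $g_i = g_j$: then $\Sp_{2g_i}(\ZZ/\ell) \cong \Sp_{2g_j}(\ZZ/\ell)$, and Goursat's lemma allows the fiber product to sit along the full group (or along $\PSp_{2g_i}(\ZZ/\ell)$), giving a diagonal embedding that your argument does not exclude. The paper handles this using the very mechanism you reserved for $\ell = 2$. For $\ell \neq \ell_i$, the intersection $L_\ell := \QQ(J_i[\ell]) \cap \QQ(J_j[\ell])$ is unramified at $\ell_i$ by N\'eron--Ogg--Shafarevich, so the inertia group $I_{\ell_i}$ lands inside $N_i := \Gal(\QQ(J_i[\ell])/L_\ell)$, which is normal in $\Sp_{2g_i}(\ZZ/\ell)$. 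One then rules out $N_i \subset \{\pm I\}$ using that $J_i$ has semistable reduction of toric rank one at $\ell_i$ (Corollary~\ref{cor:toric_rk_one}), so the inertia-fixed part of $J_i[\ell]$ is nonzero, whereas $-I$ has no nonzero fixed vectors. Simplicity of $\PSp_{2g_i}(\ZZ/\ell)$ then forces $N_i = \Sp_{2g_i}(\ZZ/\ell)$. In short, the inertia-at-$\ell_i$ input is needed at \emph{every} prime, not just at $2$; your odd-prime paragraph as written does not use hypothesis~(2) at all and would fail for, say, two isomorphic Jacobians.

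Second, your final paragraph (``a standard argument upgrades the local statements to the adelic setting'') hides substantial work. Knowing the mod-$\ell$ image for every $\ell$ does not automatically determine the adelic image: one must exclude entanglement between different primes and lift mod-$\ell$ surjectivity to $\ell$-adic surjectivity. The paper does this by adapting Jones' commutator criterion (a variant of \cite[Theorem~2.18]{Jones_Gl2_reps}) to $\mc G = \GSp^{\Delta}$, after verifying structural hypotheses (A0)--(A3) for this group; combined with the lifting results of \cite{Landesman_Lifting_Symplectic} and \cite{Yelton_Lifting_images}, this yields Proposition~\ref{prop:criterion_based_on_jones}: it suffices that the $\QQ(J_i[2])$ be linearly disjoint over $\QQ$ and the $\QQ(J_i[\ell])$ be pairwise linearly disjoint over $\QQ(\zeta_\ell)$ for each odd $\ell$. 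You should either invoke such machinery explicitly or supply the argument; ``essentially simple modulo the similitude'' is a description of the ingredients, not a proof.
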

	Note that for a generic hyperelliptic curve over $\QQ$ there exists a prime $\ell$ such that $\ord_{\ell}(\Delta) = 1$, where $\Delta$ is the discriminant of the defining polynomial (cf. \cite[Theorem~29]{van_bommel_2014}). Moreover, if $J = J_1 \times \ldots \times J_n$ has maximal  Galois image, then each Jacobian $J_i$ also has maximal Galois image. Therefore the assumptions
	of Theorem~\ref{thm:main_theorem} are quite reasonable.\\
	
	\noindent In order to illustrate our result we offer two examples.
	\begin{Example} \label{ex:three_jacobians}
		Let $J_i$ be the Jacobian of the hyperelliptic curve with the affine equation $y^2 = f_i(x)$ for $i = 1, 2, 3$, where
		\begin{align*}
			f_1(x) &:= x^6 + 7471225 \cdot x^5 + 16548721 \cdot x^4 + 6639451 \cdot x^3\\
			&+ 16857421 \cdot x^2 + 20754195 \cdot x + 9508695,\\
			f_2(x) &:= x^8 + 10781051650 \cdot x^7 + 5302830080 \cdot x^6 + 33362176 \cdot x^5 + 10656581376 \cdot x^4\\
			&+ 5522318080 \cdot x^3 + 4238752256 \cdot x^2 + 3613465600 \cdot x + 3725404480,\\
			f_3(x) &:= x^{14} +1122976550518058592759939074 \cdot x^{13} +
			10247323490706358348644352 \cdot x^{12}\\
			&+1120184609916242124087443456 \cdot x^{11} + 186398290364786000921886720 \cdot x^{10}\\
			&+1685990245699349559300014080 \cdot x^9 + 387529952672653585935499264 \cdot x^8\\
			&+1422826957983635547417870336 \cdot x^7 + 585983998625429997308035072 \cdot x^6\\
			&+ 607434202225985243206107136 \cdot x^5 +1820210247550502007557029888 \cdot x^4\\
			&+ 533014336994715937945092096 \cdot x^3 + 595803405154942945879752704 \cdot x^2\\
			&+1276845913825955586899050496 \cdot x +1323672381818030813822668800.
		\end{align*}
		Note that $J_1$, $J_2$ and $J_3$ are abelian varieties of dimensions $2$, $3$
		and $6$ respectively (see Section~\ref{sec:max_gal_image} for the prerequisites on hyperelliptic curves). $J_1$ and $J_2$ have maximal Galois image by \cite[Theorem 1.3]{Landesman_Hyperelliptic_curves_maximal}.
		$J_3$ has maximal Galois image by \cite[\S 8]{Anni_Dokchitser_Constructing_hyperelliptic} combined with
		\cite[Lemma 4.3]{Landesman_Hyperelliptic_curves_maximal} and \cite[Theorem 1.1(a)]{Yelton_Lifting_images} (see the beginning of Section~\ref{sec:example} for a detailed explanation).
		Let $\ell_1 = 421$, $\ell_2 = 13$, $\ell_3 = 89$. Then $\ell_i | \Delta_i$, $\ell_i^2 \nmid \Delta_i$ for $i = 1, 2, 3$ and $\ell_i \nmid \Delta_j$ for $i \neq j$. Thus the product of Jacobians $J := J_1 \times J_2 \times J_3$ has maximal Galois image by Theorem~\ref{thm:main_theorem}.
	\end{Example}
	\begin{Example} \label{ex:family}
		Let for any $t \in \ZZ$, $F_t(x) := f_3(x) + N \cdot t$, where $f_3$ is as in Example~\ref{ex:three_jacobians} 
		and $N :=  2201590757511816436065484800$. Let $J_t$ denote the Jacobian of the curve with the affine equation
		$y^2 = F_t(x)$. Then there exists an infinite increasing sequence $\{t_i\}_{i = 1}^{\infty}$ of integers such that
		for every $n$ the abelian variety $J_{t_1} \times \ldots \times J_{t_n}$ has maximal Galois image.
		One can take for example $(t_1, \ldots, t_{10}) := (0, 2, 5, 7, 9, 10, 13, 14, 17, 21)$. See Section~\ref{sec:example} for the details.
	\end{Example}
	The primary motivation for this article was the following question.
	\begin{Question}
		Suppose that $A_1$, $A_2$ are abelian varieties over $\QQ$. What can be said about the intersection of division fields
		$\QQ(A_1[n]) \cap \QQ(A_2[n])$?
	\end{Question}
	Several variants of this question were studied in literature, see e.g. \cite{Kowalski_Some_local_global}, \cite{Lombardo_Non-isogenous}, \cite{Daniels_Lozano_Conicidences}, \cite{Daniels_Hatley_Ricci_Elliptic_curves}. The philosophy
	is that if $A_1$ and $A_2$ do not have isogenous factors, then their division fields are \emph{usually}
	as disjoint as they can be. As an application of our result we show that if $J_1, \ldots, J_n$ satisfy the assumptions of Theorem~\ref{thm:main_theorem}, then their division fields are maximally disjoint. This is to be expected,
	but nevertheless we have not found this result in the literature.
	
	For any non-empty $\ms A \subset \{ 1, \ldots, n \}$, let 
	\[
		J_{\ms A} := \prod_{a \in \ms A} J_a.
	\]
	\begin{Theorem} \label{thm:division_fields}
		Keep the assumptions of Theorem~\ref{thm:main_theorem}.
		Let $\ms A, \ms B$ be disjoint non-empty subsets of $\{1, \ldots, n\}$.
		Then for any $m_1, m_2 \in \NN$ the intersection $\QQ(J_{\ms A}[m_1]) \cap \QQ(J_{\ms B}[m_2])$ equals
		\begin{equation} \label{eqn:intersection_abelian}
			\QQ(\zeta_{m_1}) \left(\sqrt{\Delta_i^{m_1 + 1}} : i \in \ms A \right) \cap \QQ(\zeta_{m_2}) \left(\sqrt{\Delta_i^{m_2 + 1}} : i \in \ms B \right).
		\end{equation}
	\end{Theorem}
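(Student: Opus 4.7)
The plan is to establish both inclusions of~\eqref{eqn:intersection_abelian}. Let $K_{\ms X, m} := \QQ(\zeta_m)(\sqrt{\Delta_i^{m+1}} : i \in \ms X)$ denote the field appearing on the right-hand side before intersecting. The inclusion $\supseteq$ follows from $K_{\ms X, m} \subseteq \QQ(J_{\ms X}[m])$: the Weil pairing forces $\QQ(\zeta_m) \subseteq \QQ(J_{\ms X}[m])$, and since $\QQ(J_i[2])$ is the splitting field of $f_i$, it contains $\sqrt{\Delta_i}$. Note the exponent $m+1$ is a device that makes $\sqrt{\Delta_i^{m+1}}$ equal to $\sqrt{\Delta_i}$ up to rationals when $m$ is even and rational when $m$ is odd, matching the fact that $\sqrt{\Delta_i} \in \QQ(J_i[m])$ precisely when $2 \mid m$.

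For the non-trivial inclusion $\subseteq$, set $L := \QQ(J_{\ms A}[m_1]) \cap \QQ(J_{\ms B}[m_2])$. The strategy reduces to two claims: (i) $L/\QQ$ is abelian, and (ii) the maximal abelian subextension of $\QQ(J_{\ms X}[m])/\QQ$ equals $K_{\ms X, m}$. Combining (i) and (ii) yields
\[
L \subseteq \QQ(J_{\ms A}[m_1])^{\mathrm{ab}} \cap \QQ(J_{\ms B}[m_2])^{\mathrm{ab}} = K_{\ms A, m_1} \cap K_{\ms B, m_2},
\]
as desired.

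For (ii), Theorem~\ref{thm:main_theorem} applied to $J_{\ms X}$ identifies $\Gal(\QQ(J_{\ms X}[m])/\QQ)$ with an explicit subgroup of $\prod_{i \in \ms X} \GSp_{2g_i}(\ZZ/m)$. Because each $g_i \geq 2$, the groups $\Sp_{2g_i}(\ZZ/m)$ are perfect away from the prime $2$ (Mennicke, Matsumoto), so the abelianization of the image is generated by the common similitude character (which cuts out $\QQ(\zeta_m)$) together with, at the prime $2$, the discriminant characters $\chi_{\Delta_i}$ (which cut out the $\sqrt{\Delta_i}$'s), producing exactly $K_{\ms X, m}$. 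For (i), apply Goursat's lemma to
\[
H := \Gal(\QQ(J_{\ms A}[m_1]) \cdot \QQ(J_{\ms B}[m_2])/\QQ) \hookrightarrow \Gal(\QQ(J_{\ms A}[m_1])/\QQ) \times \Gal(\QQ(J_{\ms B}[m_2])/\QQ).
\]
Applying Theorem~\ref{thm:main_theorem} to $J_{\ms A \cup \ms B}$ at level $m_1 m_2$ shows that $H$ contains the commutator subgroup of each factor embedded via the obvious inclusions, so the common Goursat quotient, which is $\Gal(L/\QQ)$, is a quotient of both abelianizations and is therefore abelian.

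The principal technical hurdle is the analysis at the prime $2$: the hyperelliptic discriminant obstructions interact non-trivially with the failure of perfectness of $\Sp_{2g}(\ZZ_2)$, and one must confirm that the $2$-adic abelianization of the maximal hyperelliptic image has no hidden quotients beyond those generated by the similitude character and the $\chi_{\Delta_i}$'s. This should follow from the explicit description of the maximal $2$-adic image used in proving Theorem~\ref{thm:main_theorem} and recalled in Section~\ref{sec:max_gal_image}.
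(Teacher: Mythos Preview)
Your proposal is correct and follows essentially the same strategy as the paper: both arguments reduce the problem to (a) showing the intersection $L$ is abelian over $\QQ$ (equivalently, by Kronecker--Weber, contained in $\QQ(\zeta_\infty)$), and (b) identifying the maximal abelian subextension of each $\QQ(J_{\ms X}[m])/\QQ$ with $K_{\ms X,m}$. Step (b) is exactly the paper's Lemma~\ref{lem:maximal_abelian}, proved there via the identity $[H(m),H(m)]=[G(m),G(m)]=\Phi^{-1}_{\mc S,2}(\prod_i \AA_i)(m)$, which is the precise form of your ``perfectness away from $2$ plus sign characters at $2$'' heuristic and resolves the $2$-adic hurdle you flag.

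The only packaging difference is in step (a). The paper works at the infinite level: maximality of $\im\rho_J$ gives $\mc S H=[G,G]=\prod_i[G_i,G_i]$, whence $\QQ(J_1[\infty]),\ldots,\QQ(J_n[\infty])$ are linearly disjoint over $\QQ(\zeta_\infty)$, immediately forcing $L\subset\QQ(\zeta_\infty)$. Your Goursat argument at finite level is equivalent---the assertion that $H$ contains each factor's commutator subgroup is precisely the product decomposition $[G,G]=\prod_i[G_i,G_i]$ pushed down to level $\LCM(m_1,m_2)$---but the infinite-level formulation is slightly cleaner since it avoids tracking the two levels $m_1,m_2$ separately.
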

	Note that the intersection~\eqref{eqn:intersection_abelian} can be easily computed for any given $m_1, m_2, \Delta_1, \ldots, \Delta_n$.
	In particular, if $\GCD(m_1, \Delta_1 \cdot \ldots \cdot \Delta_n) = \GCD(m_2, \Delta_1 \cdot \ldots \cdot \Delta_n) = 1$,
	then for any disjoint non-empty subsets $\ms A$, $\ms B$ of $\{1, \ldots, n\}$
	\[
	\QQ(J_{\ms A}[m_1]) \cap \QQ(J_{\ms B}[m_2]) = \QQ(\zeta_d),
	\]
	where $d := \GCD(m_1, m_2)$.\\
	
	The paper is structured as follows. Section~\ref{sec:max_gal_image} contains a brief review of needed facts. In Section~\ref{sec:criterion} we show
	that proving that $J$ has maximal Galois image comes down to checking whether $2$-division fields
	are disjoint and $\ell$-division fields are pairwise disjoint for every prime $\ell > 2$. This follows by Jones' group theoretic results from~\cite{Jones_Gl2_reps}. In Section~\ref{sec:8-torsion} we show that the $2$-division fields are disjoint. In Section~\ref{sec:p-torsion} we investigate the $\ell$-division fields for primes $\ell > 2$. To this end we 
	analyse the reduction of the Jacobians and the ramification in the division fields. Section~\ref{sec:corollary} is devoted to the proof of Theorem~\ref{thm:division_fields}. In the final section we discuss Example~\ref{ex:family}.\\
	
	It seems hard to get rid of the condition that $g_1, \ldots, g_n \neq 1$ in Theorem~\ref{thm:main_theorem}.
	This would require checking whether $36$-division fields are disjoint, cf. \cite{Daniels_Hatley_Ricci_Elliptic_curves}.
	This is plausible in the case when $g_1 = \ldots = g_n = 1$,
	since there are explicit descriptions of $m$-division fields of elliptic curves for small $m$.
	However, it is much harder to describe e.g. the $3$-division field of a general hyperelliptic curve.
	
	\subsection*{Notation}
	We denote by $\lambda : \GSp_{2g} \to \GG_m$ the similitude character.
	For any group scheme $\mc G$ over $\ZZ$ and $m \in \ZZ_+$ we consider the natural reduction homomorphism $\Phi_{\mc G, m} : \mc G(\wh{\ZZ}) \to \mc G(\ZZ/m)$. Also, for any subgroup $H$ of $\mc G(\wh{\ZZ})$, we write $H(m) := \Phi_{\mc G, m}(H)$. Similarly, for any prime $\ell$ we denote by $H(\ell^{\infty})$ the image of $H$ under the projection $\mc G(\wh{\ZZ}) \to \mc G(\ZZ_{\ell})$.
	
	Consider the action of $S_{2g_i+2}$ on the vector space $(\ZZ/2)^{2g_i + 2}$ equipped with the standard inner product, i.e.
	\[
		\langle (x_1, \ldots, x_{2g_i + 2}), (y_1, \ldots, y_{2g_i + 2}) \rangle := x_1 \cdot y_1 + \ldots + x_{2g_i + 2} \cdot y_{2g_i + 2}.
	\]
	It descends to an action on $(1, \ldots, 1)^{\bot}/\Span_{\FF_2}\{(1, \ldots, 1)\} \cong (\ZZ/2)^{2g_i}$ and provides
	a map $S_{2g_i + 2} \to \GSp_{2g_i}(\ZZ/2)$. This map is injective as
	we are assuming that $g_i \ge 2$.
	We denote its image by $\SS_i$.
	Also, we write $\AA_i$ for the copy of the alternating group on $2g_i + 2$ elements inside of $\SS_i$.
	
	\subsection*{Acknowledgements}
	The author wishes to express his gratitude to Wojciech Gajda for suggesting this problem and for helpful
	conversations.  Additionally, the author extends thanks to Bartosz Naskrecki for discussions that lead to Example~\ref{ex:family}.
	The author also acknowledges Marc Hindry, Grzegorz Banaszak, Davide Lombardo and Nathan Jones
	for comments on the topic. Finally, the author is grateful for the valuable suggestions
	of two anonymous reviewers. In particular, the author would like to thank one of the reviewers and David Savitt for identifying a mistake in an earlier version of the manuscript. The author was supported by the grant 038/04/N\'{S}/0011, which is a part of the
	project "Initiative of Excellence -- Research University" on Adam Mickiewicz University.
	
	\section{Preliminaries} \label{sec:max_gal_image}
	In this section we briefly recall some facts needed in the sequel. We start by reviewing basic properties of hyperelliptic curves. For a reference see e.g.~\cite{Handbook_elliptic_hyperelliptic}. Let $K$ be a field of characteristic different then~$2$.
	The affine curve given by the equation $y^2 = f(x)$ turns out to be smooth for any separable polynomial $f \in K[x]$. The corresponding smooth projective curve $C$
	is called a \bb{hyperelliptic curve}.
	Its genus equals $\left\lfloor \frac{\deg f - 1}{2} \right\rfloor$. 
	Note that for any distinct roots $\alpha_1, \alpha_2$ of $f$:
	\[
		2 \cdot ((\alpha_1, 0) - (\alpha_2, 0)) = \divv \left(\frac{x - \alpha_1}{x - \alpha_2} \right)
	\]
	and thus $(\alpha_1, 0) - (\alpha_2, 0) \in \Pic^0(C)[2]$. This fact may be used to show that
	the $2$-torsion field of $\Jac(C)$, the Jacobian of $C$, is the splitting field of the polynomial $f(x)$.
	Suppose now that $K = \QQ$ and $f \in \ZZ[x]$ has discriminant~$\Delta$. Then 
	the equation $y^2 = \ol f(x)$ (where $\ol f \in \FF_{\ell}[x]$ is the reduction of
	$f$ modulo $\ell$) defines a smooth hyperelliptic curve over $\FF_{\ell}$ for all primes
	$\ell \nmid 2 \cdot \Delta$. Thus $\Jac(C)$ has good reduction over all such primes.\\
	
	The following two facts are standard. We give a proof for a lack of a reference.
	\begin{Lemma} \label{lem:one_double_root}
		Let $f \in \ZZ[x]$ be a polynomial with discriminant $\Delta$ and let $\ol f \in \FF_{\ell}[x]$ denote its reduction modulo a prime $\ell$.
		If $\ord_{\ell}(\Delta) = 1$, then $\ol f$ has a unique double root in $\ol{\FF}_{\ell}$.
	\end{Lemma}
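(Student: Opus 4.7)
The plan is to work $\ell$-adically: after normalising so that $f$ is monic in $\ZZ_\ell[x]$ (a harmless assumption in the relevant setting, as $v_\ell(\Delta)=1$ excludes $\ell$ dividing the leading coefficient in degree $\geq 2$), factor $f$ via Hensel's lemma and track valuations. Write $\bar f = \prod_k \bar p_k^{m_k}$ in $\FF_\ell[x]$ with pairwise distinct irreducible $\bar p_k$, and let $d_k := \deg \bar p_k$. By Hensel, this lifts to $f = \prod_k f_k$ in $\ZZ_\ell[x]$ with $\bar f_k = \bar p_k^{m_k}$. The identity $\disc(FG) = \disc(F)\disc(G)\mathrm{Res}(F,G)^2$ combined with $\mathrm{Res}(f_j, f_k) \in \ZZ_\ell^{\times}$ for $j \neq k$ (the reductions being coprime) then yields
\[
	v_\ell(\Delta) \;=\; \sum_k v_\ell(\disc f_k).
\]

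The heart of the argument is a case analysis bounding each summand by the type $(m_k, d_k)$. If $m_k = 1$, then $\bar f_k$ is separable over the perfect field $\FF_\ell$ and $v_\ell(\disc f_k) = 0$. If $(m_k, d_k) = (2, 1)$, writing $f_k = x^2 + bx + c$ with $\bar b = -2\beta$ and $\bar c = \beta^2$ gives $\disc f_k = b^2 - 4c \in \ell\ZZ_\ell$, so $v_\ell(\disc f_k) \geq 1$ (with equality generically). In every remaining case, I claim $v_\ell(\disc f_k) \geq 2$. For $d_k \geq 2$, I would base-change to the unramified extension $L/\QQ_\ell$ of degree $d_k$, Hensel-split $f_k$ into $d_k$ Galois-conjugate factors $g_\sigma$ with $\bar g_\sigma = (x - \beta^\sigma)^{m_k}$, and deduce $v_\ell(\disc f_k) = d_k \cdot v_\ell(\disc g_0) \geq d_k \geq 2$ by applying the earlier bullet to each factor.

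For the remaining case $(m_k, d_k) = (m, 1)$ with $m \geq 3$, I would factor $f_k = \prod_s h_s$ in $\ZZ_\ell[x]$ into monic irreducibles of degrees $e_s$ summing to $m$. Each $\bar h_s$ is a power of a linear polynomial, so the roots of $h_s$ form a single Galois orbit with common valuation $\mu_s = v_\ell(h_s(0))/e_s \geq 1/e_s$. The ultrametric inequality then gives $v_\ell(\disc h_s) \geq e_s(e_s - 1)\mu_s \geq e_s - 1$, while roots of distinct $h_s$, $h_t$ all reduce to $0$ and so $v_\ell(\mathrm{Res}(h_s, h_t)) \geq e_s e_t \min(\mu_s, \mu_t) \geq \min(e_s, e_t) \geq 1$. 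Summing over the $r$ irreducible factors yields $v_\ell(\disc f_k) \geq (m-r) + r(r-1) = m + r(r-2) \geq 2$, regardless of $r$.

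The constraint $\sum_k v_\ell(\disc f_k) = 1$ now forces exactly one index $k$ of type $(m_k, d_k) = (2,1)$ contributing precisely $1$, with every other $m_k = 1$. Hence $\bar f$ has a unique double root $\beta \in \FF_\ell \subset \bar\FF_\ell$, and all remaining roots are simple. The main obstacle is the third case: ruling out multiplicity $\geq 3$ and degree $\geq 2$ simultaneously requires the irreducible-factor analysis, and the key technical input is the Newton-polygon/ultrametric bound $v_\ell(\disc h_s) \geq e_s(e_s - 1)\mu_s$ on a totally ramified local factor.
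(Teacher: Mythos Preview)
Your proof is correct and reaches the same conclusion, but by a somewhat different route than the paper. The paper's argument is shorter and more elementary: after translating a repeated root $\alpha \in \FF_{\ell^m}$ to~$0$ and Hensel-lifting the factorisation $\bar f = x^e \cdot \bar h$ (with $\bar h(0)\neq 0$) to $f = g\cdot h$ over $\ZZ_{\ell^m}$, where $g = x^e + \sum_{i<e} a_i x^i$ with each $a_i$ divisible by $\ell$, it observes directly that the last $e-1$ columns of the $(2e-1)\times(2e-1)$ Sylvester matrix for $\disc(g)$ are divisible by $\ell$, yielding $\ord_\ell(\disc g) \ge e-1$ in one stroke and hence $e=2$. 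Your approach instead carries out a complete factorisation $f=\prod_k f_k$ along the distinct irreducible factors of $\bar f$ and bounds each $v_\ell(\disc f_k)$ by a valuation-theoretic case analysis (unramified base change for $d_k\ge 2$, and a secondary factorisation $f_k=\prod_s h_s$ with Newton-polygon root-valuation estimates for $m_k\ge 3$). This is more elaborate---the inequality $v_\ell(\disc f_k)\ge (m-r)+r(r-1)$ in particular requires tracking both discriminants and pairwise resultants of the local irreducible factors---but it is conceptually clean and in fact yields slightly more than the lemma states: your analysis forces $d_k=1$, so the double root already lies in $\FF_\ell$, a fact the paper extracts only afterwards (in the subsequent corollary) via $x-\alpha=\gcd(\bar f,\bar f')$. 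Both arguments tacitly assume the leading coefficient of $f$ is an $\ell$-adic unit, which is the relevant situation in the paper.
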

	\begin{proof}
		The condition $\ell | \Delta$ implies that $\ol f$ has a double root $\alpha \in \FF_{\ell^m}$ for some $m \ge 1$. Let
		$\QQ_{\ell^m}$ denote the unique unramified extension of $\QQ_{\ell}$ of degree $m$ and let $\ZZ_{\ell^m}$ be its ring of integers.
		By using the map $x \mapsto x + \alpha$, we may assume that $\alpha = 0$ and $f \in \ZZ_{\ell^m}[x]$. Factor $\ol f$ as $x^{e} \cdot \ol h(x)$,
		where $\ol h \in \FF_{\ell^m}[x]$ and $\ol h(0) \neq 0$. We show now that $e = 2$ and that $\ol h$ is separable. By Hensel's lemma,
		the factorisation $\ol f = x^e \cdot \ol h$ lifts to a factorisation
		$f = g \cdot h$, where $g, h \in \ZZ_{\ell^m}[x]$, $g = x^e + \sum_{i = 0}^{e-1} a_i x^i$ and $\ord_{\ell}(a_i) > 0$ for $i = 0, \ldots, e-1$.
		Let $\Delta(g), \Delta(h)$ be the discriminants of $g$ and $h$. Then, by the formula for the discriminant of a product of polynomials (see e.g. \cite[Proposition~15.A.2]{Childs_Concrete_Higher_algebra}), $\Delta(g) \cdot \Delta(h) | \Delta(f)$, which
		easily implies that $\ord_{\ell}(\Delta(g)) = 1$ and $\ord_{\ell}(\Delta(h)) = 0$. In particular, $\ol h$ is separable.
		Moreover, $\Delta(g)$ is the determinant of the $(2e - 1) \times (2e - 1)$ Sylvester matrix
		\begin{align*}
			\Delta(g) =
			\left| {\begin{array}{cccccccc}
					1  & a_{e-1}  &\cdots  	&\cdots &a_{0}   	&0	  	& \cdots  & 0       \\
					0        & 1      &a_{e-1} 	&\cdots &\cdots  	&a_{0}  	& \cdots  & 0       \\
					\vdots & \ddots    &\dots       	&\dots  &\dots   	&\ddots 	& \ddots  & \vdots  \\
					\vdots & \ddots    &0            	&1  &\cdots  	&		& \cdots  & a_{0}   \\
					e & e \cdot a_{e-1} &\cdots  	&\cdots &1 \cdot a_{1}   	&0	  	& \cdots  & 0       \\
					0	     & e    &e \cdot a_{e-1} 	&\cdots &\cdots  	&1 \cdot a_{1}  	& \cdots  & 0        \\
					\vdots  & \ddots    &\dots   	&\dots  &\dots   	&\ddots 	& \ddots  & \vdots   \\
					0 	     & \ddots    &0       		&e &e \cdot a_{e-1} 	&\cdots 	& \cdots  & 1 \cdot a_1    \\
			\end{array} } \right|.
		\end{align*}
		Note that the last $(e-1)$ columns of this determinant are divisible by $p$. Therefore, by expanding the determinant
		one sees that $\ord_{\ell}(\Delta(g)) \ge e-1$. But $\ord_{\ell}(\Delta(g)) = 1$, which proves that $e = 2$. 
	\end{proof}
	\begin{Corollary} \label{cor:toric_rk_one}
		Let $C$ be a hyperelliptic curve with the affine part given by the equation $y^2 = f(x)$, where $f \in \ZZ[x]$
		is a polynomial with discriminant $\Delta$. Suppose that $\ell$ is a prime satisfying $\ord_{\ell}(\Delta) = 1$.
		Then $\Jac(C)$ has semistable reduction of toric rank one over $\QQ_{\ell}$.
	\end{Corollary}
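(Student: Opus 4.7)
The strategy is to exhibit an explicit semistable model of $C$ over $\ZZ_{\ell}$ whose geometric special fiber carries a single node with smooth normalisation of genus $g - 1$, and then to read off semistability and toric rank of $\Jac(C)$ from the classical dictionary between stable models of curves and N\'eron models of Jacobians.

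First I would invoke Lemma~\ref{lem:one_double_root} to factor $\ol f = (x - \ol{\alpha})^2 \cdot \ol h(x)$ with $\ol h$ separable and $\ol h(\ol{\alpha}) \neq 0$ (after a harmless unramified base change, which does not affect semistability or the toric rank). This pinpoints a unique singularity on the geometric special fiber of the Weierstrass model, located at $(\ol{\alpha}, 0)$, whose completed local ring is $\ol{\FF}_{\ell} \llbracket x, y \rrbracket / (y^2 - x^2 u)$ for a unit $u = \ol h(\ol{\alpha})$, i.e.\ an ordinary double point. Normalising via the substitution $y = (x - \ol{\alpha}) z$ then produces the smooth hyperelliptic curve $z^2 = \ol h(x)$ of genus $g - 1$, so the dual graph of the geometric special fiber consists of a single vertex of genus $g - 1$ carrying one loop, with first Betti number equal to one.

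Next I would verify that the integral Weierstrass model is itself regular at the node, so that it already furnishes a semistable model of $C$ over $\ZZ_{\ell}$ without any further blow-ups. Reusing the Hensel factorisation from the proof of Lemma~\ref{lem:one_double_root}, one locally has $f = g \cdot h$ with $g(x) = (x - \alpha)^2 + a_1 (x - \alpha) + a_0$, $\ord_{\ell}(a_i) \ge 1$, and $\ord_{\ell}(\Delta(g)) = \ord_{\ell}(a_1^2 - 4 a_0) = 1$; these constraints together force $\ord_{\ell}(a_0) = 1$, whence the defining equation $y^2 - g(x) h(x)$ does not lie in the square of the maximal ideal $(\ell, x - \alpha, y)$ and regularity follows. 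Once semistability of $C$ is in hand, the standard description of the N\'eron model of the Jacobian of a semistable curve --- its identity component is an extension of the Jacobian of the normalisation of the geometric special fiber by a torus whose rank is the first Betti number of the dual graph --- yields semistable reduction for $\Jac(C)$ with toric rank exactly one. The main delicate point is precisely this regularity check, which is what genuinely uses the hypothesis $\ord_{\ell}(\Delta) = 1$ rather than the weaker $\ell \mid \Delta$.
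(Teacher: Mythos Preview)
Your proof is correct and follows essentially the same route as the paper: invoke Lemma~\ref{lem:one_double_root}, identify the unique node on the special fiber, compute the dual graph as a single vertex with one loop, and read off the toric rank via the standard description of the N\'eron model of a Jacobian of a semistable curve. The paper streamlines two of your steps --- it notes directly that $\alpha \in \FF_{\ell}$ since $x - \alpha = \GCD(\ol f, \ol f')$, avoiding the unramified base change, and it omits your regularity verification, which (while correct) is not needed here because the toric-rank formula from \cite[Example~8, p.~246]{BLR1990} requires only a nodal special fiber, not a regular total space.
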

	\begin{proof}
		By Lemma~\ref{lem:one_double_root} $\ol f(x)$ has a unique double root $\alpha \in \ol{\FF}_{\ell}$. Moreover, $\alpha \in \FF_{\ell}$, since
		$x - \alpha = \GCD(\ol f, \ol f')$. Thus the hyperelliptic curve over $\FF_{\ell}$ with the affine equation $y^2 = \ol f(x)$ has a single node. This implies that the equation
		$y^2 = f(x)$ gives a semistable model of the curve over $\ZZ_{\ell}$ with a node in the special fiber. The dual graph of the special fiber is a single vertex with a loop
		and thus $\Jac(C)$ has toric rank one over $\QQ_{\ell}$ by \cite[Example~8, p.~246]{BLR1990}.
	\end{proof}
	
	We explain now what it means for a product of hyperelliptic Jacobians to have maximal Galois image. Let $J$ be as in Setup~\ref{setup}. In the sequel we treat $\prod_{i=1}^n \GSp_{2g_i}$ as a subgroup of $\GSp_{2g}$,
	by identifying a tuple of matrices $(A_1, \ldots, A_n)$ with the block diagonal matrix $\diag(A_1, \ldots, A_n)$.
	There are several natural limitations on the size of the image of $\rho_J$. Firstly, $\im \rho_J \subset \GSp^{\Delta}(\wh{\ZZ})$, where $\GSp^{\Delta}$ is a subgroup scheme of $\GSp_{2g}$ given by
	\begin{equation} \label{eqn:gsp_def}
		\GSp^{\Delta}(R) := \left\{ (A_1, \ldots, A_n) \in \prod_{i=1}^n \GSp_{2g_i}(R) : \lambda(A_1) = \ldots = \lambda(A_n) \right\}
	\end{equation}
	for any ring $R$. This follows from the fact
	that the composition
	\[
	\lambda \circ \rho_{J_i} : \Gal(\ol{\QQ}/\QQ) \to \wh{\ZZ}^{\times}
	\]
	equals the cyclotomic character for $i = 1, \ldots, n$. 
	
	Secondly, since $\QQ(J[2])$ is the splitting field of the polynomial $f_1 \cdot \ldots \cdot f_n$,
	we have
	\[
	\Gal(\QQ(J[2])/\QQ) \le \SS_1 \times \ldots \times \SS_n,
	\]
	where $\SS_i \subset \Sp_{2g_i}(\ZZ/2)$ is a copy of the symmetric group on $\deg f_i$ elements. Therefore
	\[
	\im \rho_J \subset \Phi_{\GSp^{\Delta}, 2}^{-1}(\SS_1 \times \ldots \times \SS_n),
	\]
	where $\Phi_{\GSp^{\Delta}, 2} : \GSp^{\Delta}(\wh{\ZZ}) \to \GSp^{\Delta}(\ZZ/2)$ is the reduction map.
	We prove below a lower bound for the quotient of those two groups, using the containment $\QQ(\sqrt{\Delta_1}, \ldots, \sqrt{\Delta_n}) \subset \QQ(J[2]) \cap \QQ(\zeta_{\infty})$.
	\begin{Proposition} \label{prop:inequality_index_C1_C2}
		Keep the above notation. The following inequality holds
		\begin{equation*} 
			[\Phi_{\GSp^{\Delta}, 2}^{-1}(\SS_1 \times \ldots \times \SS_n) : \im \rho_J] \ge 2^n.
		\end{equation*}
	\end{Proposition}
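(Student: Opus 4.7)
The plan is to exhibit $n$ characters $\alpha_1, \ldots, \alpha_n : G \to \{\pm 1\}$ on the group $G := \Phi_{\GSp^{\Delta}, 2}^{-1}(\SS_1 \times \cdots \times \SS_n)$ that all vanish on $\im \rho_J$ and such that the combined map $\alpha := (\alpha_1, \ldots, \alpha_n) : G \to \{\pm 1\}^n$ is surjective. This will force $[G : \im \rho_J] \ge 2^n$.

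To construct $\alpha_i$, I would use two ingredients. Firstly, by Kronecker--Weber one has $\QQ(\sqrt{\Delta_i}) \subset \QQ(\zeta_{\infty})$, so there is a unique quadratic character $\psi_i : \wh{\ZZ}^{\times} \to \{\pm 1\}$ whose composition with the cyclotomic character cuts out $\QQ(\sqrt{\Delta_i})/\QQ$. Secondly, the reduction map $G \to \SS_1 \times \cdots \times \SS_n$ composed with the sign character on the $i$-th factor gives $\epsilon_i : G \to \{\pm 1\}$. I would then set $\alpha_i(g) := \epsilon_i(g) \cdot \psi_i(\lambda(g))$.

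The verification that $\alpha_i \circ \rho_J$ is trivial proceeds as follows. Since the similitude of $\rho_J$ equals the cyclotomic character (a consequence of the Weil pairing), the character $\psi_i \circ \lambda \circ \rho_J$ coincides with the Galois character of $\QQ(\sqrt{\Delta_i})/\QQ$ by construction of $\psi_i$. On the other hand, as recalled in Section~\ref{sec:max_gal_image}, the field $\QQ(J_i[2])$ is the splitting field of $f_i$, and the alternating subgroup of the Galois group of $f_i$ fixes $\sqrt{\Delta_i}$; hence $\epsilon_i \circ \rho_J$ also equals the character of $\QQ(\sqrt{\Delta_i})/\QQ$. The product $\alpha_i \circ \rho_J$ is therefore trivial.

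To prove surjectivity of $\alpha$, I would construct, for each subset $S \subset \{1, \ldots, n\}$, an element $g_S \in G$ satisfying $\lambda(g_S) = 1$ and $\epsilon_i(g_S) = -1$ iff $i \in S$. For each $i$, I would choose a permutation in $S_{2g_i + 2}$ of the prescribed sign (using that $2g_i + 2 \ge 6$), view it in $\SS_i \subset \Sp_{2g_i}(\ZZ/2)$, and lift it to $\Sp_{2g_i}(\wh{\ZZ})$ via the well-known surjectivity of reduction mod $2$; the resulting block-diagonal tuple lies in $G$ with similitude $1$, hence $\alpha(g_S)$ equals the characteristic vector of $S$. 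The main conceptual ingredient is the dual role played by $\QQ(\sqrt{\Delta_i})$ as a subfield of both the $2$-torsion field (from the hyperelliptic structure) and the cyclotomic field (from Kronecker--Weber); once the $\psi_i$ are available, the remaining verifications are routine.
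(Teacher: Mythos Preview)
Your proof is correct and follows essentially the same approach as the paper: the paper's ``Serre subgroup'' $H_i = \ker\bigl(\left(\tfrac{\Delta_i'}{\lambda(\cdot)}\right)\cdot\varepsilon_i(\cdot)\bigr)$ is exactly the kernel of your character $\alpha_i$, with the Kronecker symbol playing the role of your $\psi_i$. The only cosmetic difference is that the paper pins everything down at the finite level $M_i = \LCM(D_i,2)$ and computes the index as $\prod_i [\mathscr S_i(M_i):H_i]$, whereas you stay adelic and argue surjectivity of $(\alpha_1,\ldots,\alpha_n)$ directly; both routes rest on the same observation that $\QQ(\sqrt{\Delta_i})$ sits inside both $\QQ(J_i[2])$ and $\QQ(\zeta_\infty)$.
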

	\begin{proof}
		Write $\Delta_i'$ for the squarefree part of $\Delta_i$ and let
		\[
		D_i :=
		\begin{cases}
			|\Delta_i'|, & \textrm{ if } \Delta_i' \equiv 1 \pmod 4,\\
			4 \cdot |\Delta_i'|, & \textrm{ otherwise,}
		\end{cases}
		\]
		and $M_i := \LCM(D_i, 2)$.
		It is a standard fact that $\QQ(\sqrt{\Delta_i}) \subset \QQ(\zeta_D)$ if and only if $D_i | D$.
		Denote $\mathscr S_i := \Phi^{-1}_{\GSp_{2g_i}, 2}(\SS_i)$.
		The Galois action on $\QQ(\sqrt{\Delta_i})$ coming from the containment in $\QQ(J_i[2])$ can be described as
		\begin{equation} \label{eqn:action_delta_J2}
			\sigma \left(\sqrt{\Delta_i'} \right) = \varepsilon_i(\rho_{J_i}(\sigma)) \cdot \sqrt{\Delta_i'} \quad
			\textrm{ for any $\sigma \in \Gal(\QQ(J_i[M_i])/\QQ)$,}
		\end{equation}
		where $\varepsilon_i$ denotes the composition
		\[
		\mathscr S_i(M_i) \to \mathscr S_i(2) = \SS_i \to \SS_i/\AA_i \cong \{ \pm 1\}.
		\]
		On the other hand, the Galois action coming from the inclusion $\QQ(\sqrt{\Delta_i}) \subset \QQ(\zeta_{M_i})$
		is given by
		\begin{equation} \label{eqn:action_delta_cyclotomic}
			\sigma\left(\sqrt{\Delta_i'} \right) = \left( \frac{\Delta_i'}{\lambda(\rho_{J_i}(\sigma))} \right) \cdot \sqrt{\Delta_i'} \quad \textrm{ for any $\sigma \in \Gal(\QQ(J_i[M_i])/\QQ)$,}
		\end{equation}
		where $\left(\frac{\Delta_i'}{\cdot}\right)$ is the Kronecker symbol.
		Define the Serre subgroup associated to $J_i$ as
		\[
		H_i := \ker \left( \left( \frac{\Delta_i'}{\lambda(\cdot) } \right) \cdot \varepsilon_i(\cdot) : \mathscr S_i(M_i) \to \{ \pm 1 \} \right).
		\]
		Note that $H_i$ is an index $2$ subgroup of $\ms S_i(M_i)$.
		Then~\eqref{eqn:action_delta_J2} and~\eqref{eqn:action_delta_cyclotomic} show that
		\begin{align}
			\im \rho_{J_i} &\subset \Phi_{\GSp_{2g_i}, M_i}^{-1}(H_i), \label{eqn:containment_J_i}\\
			\im \rho_J &\subset \left(\prod_{i = 1}^n \Phi_{\GSp_{2g_i}, M_i}^{-1}(H_i) \right) \cap \GSp^{\Delta}(\wh{\ZZ}). \label{eqn:containment_J}
		\end{align}
		In particular, this yields the desired inequality, since
		\[
		\left[\Phi_{\GSp^{\Delta}, 2}^{-1}(\SS_1 \times \ldots \times \SS_n) : \left(\prod_{i = 1}^n \Phi_{\GSp_{2g_i}, M_i}^{-1}(H_i) \right) \cap \GSp^{\Delta}(\wh{\ZZ}) \right]
		= \prod_{i = 1}^n [\mathscr S_i(M_i) : H_i] = 2^n.
		\]
		See~\cite[Proposition~22]{Serre_proprietes} and \cite[\S 4]{Jones_Almost_all_ECs_are_Serre} for a similar reasoning for elliptic curves.
	\end{proof}
	\begin{Definition} \label{def:max_gal_image}
		We say that a product of hyperelliptic Jacobians $J$ defined over $\QQ$
		has \emph{maximal Galois image} if
		equality in Proposition~\ref{prop:inequality_index_C1_C2} holds.
	\end{Definition}
	Note that in the proof of the inequality of Proposition~\ref{prop:inequality_index_C1_C2}
	we used the Kronecker--Weber theorem. Hence this inequality 
	generally only holds when the underlying hyperelliptic curves are defined over $\QQ$.
	
	If $J$ has maximal Galois image, then \eqref{eqn:containment_J_i} and \eqref{eqn:containment_J} become equalities.
	Note that results of~\cite{Landesman_Lombardo_Surjectivity_Rational_Families} imply that
	a product of generic hyperelliptic Jacobians has maximal Galois image.\\
	
	In the sequel we will need the following result.
	\begin{Lemma} \label{lem:image_on_nondiv_by_Mi}
		Keep Setup~\ref{setup}. If $J_i$ has maximal Galois image, then for any integer $m$ not divisible by $M_1, \ldots, M_n$
		one has
		\[
		\Gal(J_i[m]/\QQ) \cong \mathscr S_i(m).
		\]
	\end{Lemma}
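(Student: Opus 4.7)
The strategy is to use the maximal Galois image assumption to pin down $\im \rho_{J_i}$ explicitly, then analyze its image modulo $m$. Since $J_i$ has maximal Galois image, equality holds in~\eqref{eqn:containment_J_i}, giving $\im \rho_{J_i} = \Phi^{-1}_{\GSp_{2g_i}, M_i}(H_i)$. Its image in $\GSp_{2g_i}(\ZZ/m)$ under $\Phi_{\GSp_{2g_i}, m}$ is the desired Galois group, and it is automatically contained in $\ms S_i(m)$. The work lies in proving the reverse containment $\ms S_i(m) \subseteq \Phi_{\GSp_{2g_i}, m}(\im \rho_{J_i})$.

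For this I would set $L := \LCM(m, M_i)$ and $d := \GCD(m, M_i)$ and exploit the Chinese Remainder isomorphism
\[
	\GSp_{2g_i}(\ZZ/L) \cong \GSp_{2g_i}(\ZZ/m) \times_{\GSp_{2g_i}(\ZZ/d)} \GSp_{2g_i}(\ZZ/M_i)
\]
together with the surjectivity of $\GSp_{2g_i}(\wh\ZZ) \twoheadrightarrow \GSp_{2g_i}(\ZZ/L)$. These reduce the question to the following lifting statement: for every $g \in \ms S_i(m)$, there exists $h \in H_i$ with $h \equiv g \pmod d$. Equivalently, the reduction map $H_i \to \ms S_i(d)$ must be surjective.

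Proving this surjectivity is the main step and the main obstacle. Recall that $H_i$ is the kernel of the character $\chi := \left(\frac{\Delta_i'}{\lambda(\cdot)}\right) \cdot \varepsilon_i$ on $\ms S_i(M_i)$. The smallest divisor $d_0 \mid M_i$ for which $\chi$ factors through $\ms S_i(d_0)$ equals $M_i = \LCM(2, D_i)$ itself: the Kronecker symbol has conductor exactly $D_i$, the sign $\varepsilon_i$ factors exactly through the mod-$2$ quotient, and these two contributions are independent by the Chinese Remainder Theorem. The hypothesis $M_i \nmid m$ forces $M_i \nmid d$, so $\chi$ does not descend to $\ms S_i(d)$; thus some $g_0 \in \ms S_i(M_i)$ with $g_0 \equiv I \pmod d$ satisfies $\chi(g_0) = -1$. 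For any lift $h_0 \in \ms S_i(M_i)$ of a prescribed target in $\ms S_i(d)$, exactly one of $h_0$ and $h_0 g_0$ lies in $H_i$, yielding the required $h$. A brief case analysis (separating $2 \nmid d$ from $2 \mid d$ with $D_i \nmid d$) produces $g_0$ by independently adjusting the mod-$2$ sign and the similitude factor $\lambda$.
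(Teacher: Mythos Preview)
Your proposal is correct and follows the same route as the paper: both arguments reduce, via the Chinese Remainder Theorem applied to $m$ and $M_i$, to the claim that $H_i \twoheadrightarrow \ms S_i(d)$ for every proper divisor $d \mid M_i$, and then lift an arbitrary $B_1 \in \ms S_i(m)$ by pairing it with some $B_2 \in H_i$ congruent to $B_1$ modulo $d$. The paper simply asserts this surjectivity (``one easily checks''), whereas you supply an argument via the conductor of the character $\chi$; your case split ($2 \nmid d$ versus $2 \mid d$ with $D_i \nmid d$) is exhaustive and correct, though the sentence invoking ``independence by CRT'' is only literally accurate when $D_i$ is odd --- for even $D_i$ it is the case analysis, not CRT, that carries the weight.
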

	\begin{proof}
		One easily checks that $H_i$ surjects onto $\mathscr S_i(d)$ for any $d | M_i$, $d \neq M_i$.
		Suppose now that $B_1 \in \mathscr S_i(m)$. Let $d := \GCD(M_i, m)$. Then $d \neq M_i$ and
		thus by the previous observation, there exists a matrix $B_2 \in H_i$ such that
		$B_1 \equiv B_2 \pmod d$. Therefore by the Chinese Remainder Theorem there exists a matrix $X \in \GSp_{2g_i}(\wh{\ZZ})$
		such that
		\[	\left\{
		\begin{array}{cccc}
			X &\equiv& B_1 &\pmod{m}\\
			X &\equiv& B_2 &\pmod{M_i}.
		\end{array}
		\right.
		\]
		Thus $X \in \Phi_{\GSp_{2g_i}, M_i}^{-1}(H_i)$ and the image of $X$ in $\GSp_{2g_i}(\ZZ/m)$ is $B_1$.
		This shows that $\Phi_{\GSp_{2g_i}, M_i}^{-1}(H_i)$ surjects onto $\mathscr S_i(m)$ and that
		\[
		\Gal(J_i[m]/\QQ) = \Phi_{\GSp_{2g_i}, M_i}^{-1}(H_i)(m) = \mathscr S_i(m). \qedhere
		\]
	\end{proof}

	\section{Criterion for maximality} \label{sec:criterion}
	In this section we prove a criterion for maximality of $\im \rho_J$, based on results of Jones from~\cite{Jones_Gl2_reps}.
	Before stating the result we recall two related notions from field theory.
	Let $L_1, \ldots, L_n$ be a family of extensions of a field $K$ contained in some common extension
	of~$K$. The fields $L_1, \ldots, L_n$ are said to be \bb{pairwise linearly
		disjoint over~$K$}, if $L_i \cap L_j = K$ for every $1 \le i < j \le n$. The fields
	$L_1, \ldots, L_n$ are \bb{linearly disjoint over $K$} if $\Gal(L_1 \cdot \ldots \cdot L_n/K) \cong \Gal(L_1/K) \times \ldots \times \Gal(L_n/K)$.
	This is equivalent to the condition
	\[
	L_i \cap \prod_{j \neq i} L_j = K \qquad \textrm{ for every } i = 1, \ldots, n.
	\]
	Thus linearly disjoint fields are pairwise linearly disjoint, but not vice versa. For example,
	$L_1 = \QQ(\sqrt 2)$, $L_2 = \QQ(\sqrt 3)$, $L_3 = \QQ(\sqrt 6)$ are pairwise linearly disjoint over~$\QQ$,
	but not linearly disjoint.
	\begin{Proposition} \label{prop:criterion_based_on_jones}
	Keep Setup~\ref{setup}. Suppose that $J_1, \ldots, J_n$ have maximal Galois image and that:
	\begin{itemize}
		\item the fields $\QQ(J_1[2]), \ldots, \QQ(J_n[2])$ are linearly disjoint over $\QQ$,
		\item for every prime $\ell > 2$, the fields $\QQ(J_1[\ell]), \ldots, \QQ(J_n[\ell])$ 
		are pairwise linearly disjoint over $\QQ(\zeta_{\ell})$.
	\end{itemize}
	Then $J$ has maximal Galois image.
	\end{Proposition}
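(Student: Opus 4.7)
The goal is to show equality in \eqref{eqn:containment_J}, i.e.\ that $\im \rho_J$ coincides with the group $G := \left(\prod_{i=1}^n \im \rho_{J_i}\right) \cap \GSp^{\Delta}(\wh\ZZ)$. Since each $J_i$ has maximal Galois image, $\im \rho_{J_i} = \Phi_{\GSp_{2g_i}, M_i}^{-1}(H_i)$, so $G$ coincides with the putative maximal image, and proving $\im \rho_J = G$ establishes the proposition. The strategy is to invoke Goursat-style lemmas from \textsc{Jones}~\cite{Jones_Gl2_reps}, adapted to the symplectic setting. Via Galois theory the desired equality is equivalent to saying that $\QQ(J_1[\infty]), \ldots, \QQ(J_n[\infty])$ are linearly disjoint over $\QQ(\zeta_\infty)$, so my plan is to reduce this joint, adelic statement to the prime-by-prime, small-level conditions in the hypotheses.

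Using $\wh\ZZ = \prod_\ell \ZZ_\ell$, the first step is to separate contributions by prime: surjectivity $\im \rho_J \twoheadrightarrow G$ can be tested by checking $\im\rho_J(\ell^\infty) = G(\ell^\infty)$ for each $\ell$, and the adelic assembly is then automatic because each $\im \rho_{J_i}$ already contains a principal congruence subgroup. For each odd prime $\ell$, I would then upgrade the assumed pairwise linear disjointness of $\QQ(J_i[\ell])$ over $\QQ(\zeta_\ell)$ to pairwise disjointness of $\QQ(J_i[\ell^\infty])$ over $\QQ(\zeta_{\ell^\infty})$ using the perfectness of $\Sp_{2g_i}(\FF_\ell)$ (which holds since $g_i \geq 2$ and $\ell > 2$) together with the standard fact that the successive quotients of the $\ell$-adic congruence filtration of $\Sp_{2g_i}(\ZZ_\ell)$ admit no new abelian quotients; a further Goursat application then converts pairwise into joint $n$-fold disjointness. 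At $\ell = 2$, the analogous pairwise-to-joint upgrade is blocked by the quotient $\SS_i/\AA_i \cong \{\pm 1\}$ of $\Sp_{2g_i}(\FF_2) \cong \SS_i$, which is exactly why the hypothesis at this prime is \emph{joint} rather than only pairwise; this joint hypothesis accounts for the full $2^n$-fold elementary abelian quotient, while the remaining $2$-adic lifting and the interaction with the auxiliary level $M_i$ are pinned down by the maximality of each $\rho_{J_i}$ modulo $M_i$.

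The main obstacle I anticipate is the careful bookkeeping of which quotients of each factor $\im \rho_{J_i}$ can contribute obstructions to joint surjectivity, and verifying that, outside of the $\{\pm 1\}$-quotients already accounted for by the $\QQ(\sqrt{\Delta_i})$-machinery of Proposition~\ref{prop:inequality_index_C1_C2}, the perfectness hypotheses enable the Goursat assembly to go through uniformly in $n$ and $\ell$. This is essentially a technical translation of Jones' two-factor arguments into the $n$-factor, mixed-genus, symplectic setting; the hypothesis $g_i \geq 2$ is what keeps it tractable, since it eliminates the small-group pathologies at $\ell \in \{2,3\}$ that would otherwise require separate analysis at higher auxiliary levels.
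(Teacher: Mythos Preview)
Your outline is in the right spirit, and the within-prime steps (upgrading pairwise to joint disjointness at odd~$\ell$ via Goursat and the simplicity of $\PSp_{2g_i}(\ZZ/\ell)$, and lifting from level~$\ell$ to level~$\ell^\infty$ via the congruence filtration) are correct and match the paper's Lemmas~\ref{lem:product_of_sp} and~\ref{lem:lem_subgroup_surjects_i_j}. The genuine gap is the sentence ``the adelic assembly is then automatic because each $\im\rho_{J_i}$ already contains a principal congruence subgroup.'' Knowing $\im\rho_J(\ell^\infty) = G(\ell^\infty)$ for every prime~$\ell$ does \emph{not} force $\im\rho_J = G$: a closed subgroup of a product can surject onto each factor yet be proper, the obstruction living in common finite quotients of the $G(\ell^\infty)$ across different primes. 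Here those common quotients are abelian (since the non-abelian composition factors $\PSp_{2g_i}(\ZZ/\ell)$ are pairwise non-isomorphic across primes), and the abelianisation of $G(2^\infty)$ contains the sign characters $\SS_i \to \SS_i/\AA_i$, which can in principle entangle with Kronecker-symbol characters at odd primes dividing the~$\Delta_i$. That entanglement is exactly the Serre obstruction you already folded into your target group~$G$, but you still have to \emph{prove} that no further entanglement occurs, and ``each factor contains a principal congruence subgroup'' does not do this.

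The paper handles the assembly differently and more cleanly: rather than aiming for $\im\rho_J = G$ with your smaller~$G$, it takes the larger $G = \Phi^{-1}_{\GSp^\Delta,2}(\SS_1\times\cdots\times\SS_n)$ and proves $[G,G] = [H,H]$ for $H = \im\rho_J$. Jones' Theorem~2.18 (packaged here as Corollary~\ref{cor:jones_2_18}) is precisely the cross-prime assembly statement for commutator subgroups: it reduces $[G,G]=[H,H]$ to checking $[G(\ell),G(\ell)]=[H(\ell),H(\ell)]$ for each odd~$\ell$ and $[G(8),G(8)]=[H(8),H(8)]$ at~$2$. These finite-level equalities follow from your hypotheses exactly as you sketch. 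Then Lemma~\ref{lem:lower_bound_index}, using that $\lambda\circ\rho_J$ is the cyclotomic character, converts $[G,G]=[H,H]$ into the index computation $[G:H]=[\mc SG:[G,G]]=2^n$. Working with commutators is what makes the cross-prime step tractable: the abelian obstructions that block your direct approach are invisible at the level of derived subgroups.
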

	Before the proof we introduce the necessary notation and prove several auxiliary results.
	Let $\mc G$ be a subgroup scheme of $\Gl_r$ over $\ZZ$ with a homomorphism $\delta : \mc G \to \Gl_1$. Denote $\mc S := \ker \delta$. For any subgroup $H$ of $\mc G(\wh{\ZZ})$ (respectively of $\mc G(\ZZ/m)$) write $\mc SH := \mc S(\wh{\ZZ}) \cap H$ (respectively $\mc SH := \mc S(\ZZ/m) \cap H$). Define the sets $\mf g_{\ell^m}, \mf s_{\ell^m} \subset M_r(\ZZ_{\ell})$ by equalities
	\begin{align*}
		I + \ell^m \mf g_{\ell^m} = \ker(\mc G(\ZZ_{\ell}) \to \mc G(\ZZ/\ell^m)),\\
		I + \ell^m \mf s_{\ell^m} = \ker(\mc S(\ZZ_{\ell}) \to \mc S(\ZZ/\ell^m))
	\end{align*}
	for any prime $\ell$ and $m \in \NN$. Finally, let $\mf g_{\ell^m}(\ell), \mf s_{\ell^m}(\ell) \subset M_r(\ZZ/\ell)$ be the reductions $\bmod{\, \ell}$ of $\mf g_{\ell^m}$ and $\mf s_{\ell^m}$.
	
	The following lemma is standard. We recall its proof for completeness.
	\begin{Lemma} \label{lem:lower_bound_index}
		Keep the above assumptions. Suppose that $G$ is a subgroup of $\mc G(\wh{\ZZ})$ and $H$ is a subgroup of $G$. If the following conditions are satisfied:
		\begin{itemize}
			\item $\delta : H \to \wh{\ZZ}^{\times}$ is surjective,
			\item $[H, H] = \mc S H$,
		\end{itemize}
		then
		\[
			[G : H] \ge [\mc SG : [G, G]].
		\]
		Moreover, equality holds if and only if $[G, G] = [H, H]$.
	\end{Lemma}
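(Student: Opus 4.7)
The plan is a straightforward group-theoretic decomposition. First I would use the surjectivity of $\delta: H \to \wh{\ZZ}^{\times}$ to show $G = H \cdot \mc S G$. Indeed, for any $g \in G$, the element $\delta(g)$ lies in the image of $\delta|_H$, so one can pick $h \in H$ with $\delta(h) = \delta(g)$; then $h^{-1} g \in G \cap \ker \delta = \mc S G$, giving $g \in H \cdot \mc S G$. This decomposition is the one non-obvious input and it is where the hypothesis on $\delta$ gets used.

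Next, I would apply the second isomorphism theorem to $H \cdot \mc S G$, obtaining
\[
[G : H] = [H \cdot \mc S G : H] = [\mc S G : \mc S G \cap H].
\]
By the notational convention set up before the lemma, $\mc S G \cap H$ is nothing other than $\mc S H$, so the hypothesis $[H, H] = \mc S H$ converts the right-hand side to $[\mc S G : [H, H]]$.

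Finally, I would note that since $\delta$ takes values in the abelian group $\wh{\ZZ}^{\times}$, every commutator in $G$ lies in $\ker \delta$, so $[G, G] \subseteq \mc S G$; together with the tautological $[H, H] \subseteq [G, G]$ this gives the factorization
\[
[G : H] = [\mc S G : [H, H]] = [\mc S G : [G, G]] \cdot [[G, G] : [H, H]],
\]
which simultaneously yields the inequality $[G:H] \ge [\mc S G : [G, G]]$ and the equality criterion $[G, G] = [H, H]$. I don't anticipate a real obstacle: the lemma is a clean manipulation and the only thing to double-check is that the two possible readings of $\mc S H$ (the intersection $\mc S(\wh{\ZZ}) \cap H$ defined in the paper, versus the derived subgroup-style notation that might accidentally be read) coincide, which they do by definition.
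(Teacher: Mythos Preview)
Your proposal is correct and follows essentially the same approach as the paper's proof: the paper encodes your decomposition $G = H \cdot \mc S G$ and the resulting index identity $[G:H] = [\mc S G : \mc S H]$ in a commutative diagram of short exact sequences, then concludes with the same chain $\mc S H = [H,H] \le [G,G] \le \mc S G$. Your version simply unpacks each step more explicitly.
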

	\begin{proof}
		From the diagram
		\begin{center}
			\begin{tikzcd}
			0 \arrow[r] & \mc S H \arrow[r] \arrow[d, hook] & H \arrow[r, "\delta"] \arrow[d, hook] & \wh{\ZZ}^{\times} \arrow[r] \arrow[d, Rightarrow, no head] & 0 \\
			0 \arrow[r] & \mc S G \arrow[r]                 & G \arrow[r, "\delta"]                 & \wh{\ZZ}^{\times} \arrow[r]                                & 0
			\end{tikzcd}
		\end{center}
		we see that $[G:H] = [\mc S G : \mc S H]$. But $\mc S H = [H, H] \le [G, G]$. This ends the proof.
	\end{proof}
	Keep the above notation. Let $\mc L$ be a finite set of prime numbers containing $2$.
	Consider the following assumptions:
		\begin{enumerate}
		\item[(A0)] $\mf s_{\ell^m}(\ell) = \mf s_{\ell}(\ell)$ for any prime $\ell$ and any $m \ge 1$.
		
		\item[(A1)] For any prime $\ell \not \in \mc L$ there
		exists a finite set of finite simple non-abelian groups $\mc{PS}_i(\ell)$ and surjective
		homomorphisms $\varpi_i : \mc S(\ZZ/\ell) \to \mc{PS}_i(\ell)$
		satisfying the conditions:
			\begin{itemize}
				\item for every normal subgroup $H \unlhd \mc S(\ZZ/\ell)$,
				either $H \subset \ker \varpi_i$ for some $i$, or $H = \mc S(\ZZ/\ell)$,
				
				\item for any primes $\ell' \neq \ell$, $\ell', \ell \not \in \mc L$ and any $i$,
				$\mc{PS}_i(\ell)$ is not a composition factor of $\mc S(\ZZ/\ell')$.
			\end{itemize}
			\item[(A2)] For every prime $\ell$, $\langle CD - DC : C, D \in \mf g_{\ell}(\ell) \rangle = \mf s_{\ell}(\ell)$.
			
			\item[(A3)] For every prime $\ell$, $\mf s_{\ell}(\ell)$ is generated as a $\ZZ/\ell$-vector space by a set of matrices $\{ u_i \}$, which satisfy $u_i^2 = 0$ and $I + u_i \in \mc S(\ZZ/\ell)$.		 
		\end{enumerate}	
		If we take $(\mc G, \delta) := (\GSp_{2g}, \lambda)$ then $\mc S = \Sp_{2g}$. In this context we denote $\mf{gsp}_{2g, \ell^m} := \mf g_{\ell^m}$ and $\mf{sp}_{2g, \ell^m} := \mf s_{\ell^m}$.
		Note that
		\begin{align*}
			\mf{gsp}_{2g, \ell}(\ell) &= \{ A \in M_{2g}(\ZZ/\ell) : A^T \Omega_g + \Omega_g A = (\lambdabar(A) - 1) \cdot \Omega_g \, 
			\, \textrm{ for some } \lambdabar(A) \in (\ZZ/\ell)^{\times} \},\\
			\mf{sp}_{2g, \ell}(\ell) &= \{ A \in \mf{gsp}_{2g, \ell}(\ell) : \lambdabar(A) = 1 \},
		\end{align*}
		where $\Omega_g := \begin{bmatrix}0&I_g\\-I_g&0\\\end{bmatrix}$ is the standard $2g \times 2g$ symplectic matrix.
	\begin{Lemma} \label{lem:A0-A3}
		The algebraic group $\mc G := \GSp^{\Delta}$ (defined by~\eqref{eqn:gsp_def})
		with the homomorphism $\delta : \mc G(\wh{\ZZ}) \to \wh{\ZZ}^{\times}$ given by
		\[
			\delta((A_1, \ldots, A_n)) := \lambda(A_1) = \lambda(A_2) = \ldots = \lambda(A_n)
		\]
		satisfies the assumptions (A0)-(A3) with $\mc L = \{ 2 \}$.
	\end{Lemma}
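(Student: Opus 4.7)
The plan is to reduce each of (A0)--(A3) to the corresponding statement for individual symplectic factors, using the identification $\mc S = \ker \delta = \Sp_{2g_1} \times \cdots \times \Sp_{2g_n}$ (since the common similitude $\delta$ is trivial precisely when each block's similitude is) and the ensuing product decomposition $\mf s_{\ell^m}(\ell) = \prod_{i=1}^n \mf{sp}_{2g_i, \ell}(\ell)$.

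For (A0), I would expand $(I + \ell^m A)^T \Omega_{g_i} (I + \ell^m A) = \Omega_{g_i}$ in each block to obtain $A^T \Omega_{g_i} + \Omega_{g_i} A + \ell^m A^T \Omega_{g_i} A = 0$, and reducing modulo $\ell$ recovers the $m$-independent defining relation of $\mf{sp}_{2g_i, \ell}(\ell)$. For (A2), the commutators $[C, D]$ of elements $C, D \in \mf g_\ell(\ell)$ supported only in the $i$-th factor (automatically of zero common similitude) land in the $i$-th slot of $\mf s_\ell(\ell)$, and simplicity, hence perfectness, of the Lie algebra $\mf{sp}_{2g_i}(\FF_\ell)$ for $g_i \geq 2$, $\ell \geq 3$ fills the slot. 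For (A3), I would use symplectic transvections: for each $v \in \FF_\ell^{2g_i}$, $u_v := v v^T \Omega_{g_i}$ lies in $\mf{sp}_{2g_i}(\FF_\ell)$, satisfies $u_v^2 = 0$ (since $v^T \Omega_{g_i} v = 0$), and gives $I + u_v \in \Sp_{2g_i}(\FF_\ell)$; as $v$ varies these span $\mf{sp}_{2g_i}(\FF_\ell)$ (using the bijection $S^2 \FF_\ell^{2g_i} \to \mf{sp}_{2g_i}(\FF_\ell)$, $s \mapsto s\,\Omega_{g_i}$, together with the fact that rank-one symmetric forms span the space of all symmetric bilinear forms), and packaging them into each factor yields a generating set of $\mf s_\ell(\ell)$.

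The main obstacle is (A1). I would take $\varpi_i$ to be the composition $\mc S(\ZZ/\ell) \twoheadrightarrow \Sp_{2g_i}(\FF_\ell) \twoheadrightarrow \PSp_{2g_i}(\FF_\ell)$, with $\mc{PS}_i(\ell) := \PSp_{2g_i}(\FF_\ell)$, a finite non-abelian simple group for $g_i \geq 2$, $\ell \geq 3$. For a normal $H \unlhd \mc S(\ZZ/\ell)$ not contained in any $\ker \varpi_i$, its image in each $\PSp_{2g_i}(\FF_\ell)$ is a non-trivial normal subgroup, hence all of it by simplicity; the classical classification of normal subgroups of $\Sp_{2g_i}(\FF_\ell)$ (for $g_i \geq 2$, $\ell \geq 3$) as $\{1\}, \{\pm I\}, \Sp_{2g_i}(\FF_\ell)$ then forces each projection of $H$ to $\Sp_{2g_i}(\FF_\ell)$ to be surjective. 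A standard commutator trick---given $h = (h_1, \ldots, h_n) \in H$ and $g \in \Sp_{2g_i}(\FF_\ell)$, conjugation by $(1, \ldots, 1, g, 1, \ldots, 1)$ followed by multiplication by $h^{-1}$ places $[g, h_i]$ in the $i$-th slot of an element of $H$---combined with perfectness of $\Sp_{2g_i}(\FF_\ell)$ then shows that $H$ contains $\{1\} \times \cdots \times \Sp_{2g_i}(\FF_\ell) \times \cdots \times \{1\}$ for each $i$, so $H = \mc S(\ZZ/\ell)$. For the second bullet of (A1), I would distinguish $\PSp_{2g_i}(\FF_\ell)$ from $\PSp_{2g_j}(\FF_{\ell'})$ for distinct odd primes via Sylow $\ell$-subgroup structure: the former is non-abelian of order $\ell^{g_i^2}$ (the unipotent radical of a Borel), while the latter, contained in a maximal torus of $\PSp_{2g_j}(\FF_{\ell'})$, is abelian. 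Hence $\PSp_{2g_i}(\FF_\ell)$ is not a composition factor of $\mc S(\ZZ/\ell') = \prod_j \Sp_{2g_j}(\FF_{\ell'})$, whose non-abelian composition factors are precisely the various $\PSp_{2g_j}(\FF_{\ell'})$.
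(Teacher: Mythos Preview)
Your reduction to individual symplectic factors is the right strategy, and your treatments of (A0), (A3), and the first bullet of (A1) are correct --- the commutator trick for (A1) is a clean alternative to the paper's Goursat induction. However, there are two genuine gaps. First, (A2) must hold for \emph{every} prime, but you invoke perfectness of $\mf{sp}_{2g_i}(\FF_\ell)$ only for $\ell \ge 3$, and this cannot be patched by the same method: $\mf{sp}_{2g}(\FF_2)$ is \emph{not} perfect. The linear map $A \mapsto \diag(\Omega_g A) \in \FF_2^{2g}$ is surjective on $\mf{sp}_{2g}(\FF_2)$ yet kills every bracket: writing $S = \Omega_g A$, $T = \Omega_g B$ (symmetric), one has $\Omega_g[A,B] = S\Omega_g T + T\Omega_g S$ in characteristic~$2$, and a relabelling of indices using symmetry of $S,T$ and skew-symmetry of $\Omega_g$ gives $(S\Omega_g T)_{ii} = (T\Omega_g S)_{ii}$, so the diagonal vanishes. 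The underlying issue is that an element of $\mf g_\ell(\ell)$ supported in a single block is forced into $\mf s_\ell(\ell)$, since the zero block has infinitesimal multiplier~$0$. The paper avoids this by choosing, for arbitrary $A \in \mf{gsp}_{2g_i,\ell}(\ell)$, companion blocks $A_j \in \mf{gsp}_{2g_j,\ell}(\ell)$ with the \emph{same} multiplier, so that $(A_1,\dots,A_n) \in \mf g_\ell(\ell)$; bracketing with $(0,\dots,B,\dots,0)$ then produces $(0,\dots,[A,B],\dots,0)$, and Jones' result $\langle AB-BA : A \in \mf{gsp}_{2g,\ell}(\ell),\, B \in \mf{sp}_{2g,\ell}(\ell) \rangle = \mf{sp}_{2g,\ell}(\ell)$ is valid for all~$\ell$.

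Second, your Sylow argument for the second bullet of (A1) is incorrect: Sylow $\ell$-subgroups of $\PSp_{2g_j}(\FF_{\ell'})$ for odd $\ell \neq \ell'$ need not lie in a maximal torus and need not be abelian. A Sylow $3$-subgroup of $\Sp_6(\FF_7)$ (hence of $\PSp_6(\FF_7)$) has order $81$ and sits inside the embedded $\Gl_3(\FF_7)$ as $(\ZZ/3)^3 \rtimes \ZZ/3$ with the $3$-cycle permuting coordinates, which is visibly non-abelian. The conclusion $\PSp_{2g_i}(\FF_\ell) \not\cong \PSp_{2g_j}(\FF_{\ell'})$ for distinct odd primes is nonetheless true; the paper simply cites the fact that finite simple groups of Lie type in different characteristics are non-isomorphic.
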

	\begin{proof}
		Note that:
		\begin{align*}
			\mc S &= \prod_{i = 1}^n \Sp_{2 g_i},\\
			\mf g_{\ell}(\ell) &= \left\{ (A_1, \ldots, A_n) \in \prod_{i = 1}^n \mf{gsp}_{2g_i, \ell}(\ell) : \lambdabar(A_1) = \ldots = \lambdabar(A_n) \right\},\\
			\mf s_{\ell}(\ell) &= \mf{sp}_{2g_1, \ell}(\ell) \times \ldots \times \mf{sp}_{2g_n, \ell}(\ell).
		\end{align*}
		\emph{Proof of (A2):} The proof of \cite[Lemma~6.2]{Jones_Gl2_reps} shows that for any prime $\ell$
		\[
		\mf{sp}_{2g, \ell}(\ell) = \langle AB - BA : A \in \mf{gsp}_{2g, \ell}(\ell), B \in \mf{sp}_{2g, \ell}(\ell) \rangle.
		\]
		Note that for any $A \in \mf{gsp}_{2g_i, \ell}(\ell)$ we can find $A_1 \in \mf{gsp}_{2g_1, \ell}(\ell), \ldots, A_n \in \mf{gsp}_{2g_n, \ell}(\ell)$, with $\lambdabar(A_1) = \lambdabar(A_2) = \ldots = \lambdabar(A_n)$ and $A_i = A$.
		Moreover for any $B \in \mf{sp}_{2g_i, \ell}(\ell)$
		\[
			[(A_1, A_2, \ldots, A_n), (0, 0, \ldots, B, \ldots, 0)] = (0, 0, \ldots, [A, B], \ldots, 0).
		\] 
		This ends the proof of (A2).\\
		\emph{Proof of (A1):} Let $\PSp_{2g}$ denote the projective symplectic group, i.e. the quotient of the symplectic group $\Sp_{2g}$ by the scalar matrices in the group. Recall that $\PSp_{2g}(\ZZ/\ell)$ is simple for $\ell > 2$ and $g > 1$,
		cf. \cite[Theorem~3.4.1]{OMeara_Symplectic_groups}.  For $i = 1, \ldots, n$, let
		\[
			\varpi_i : \mc S(\ZZ/\ell) \to \mc{PS}_i(\ell) := \PSp_{2g_i}(\ZZ/\ell)
		\]
		be the canonical projection. We prove the claim by induction on $n$. For $n = 1$ this follows from \cite[Lemma~6.1]{Jones_Gl2_reps}. Suppose now that $H \unlhd \mc S(\ZZ/\ell) = \prod_{i = 1}^n \Sp_{2g_i}(\ZZ/\ell)$ is a proper
		normal subgroup and that $H \not \subset \ker \varpi_i$ for any $i$. Let $G' := \prod_{i = 1}^{n - 1} \Sp_{2g_i}(\ZZ/\ell)$. Then, by the induction hypothesis $H$ surjects onto $G'$
		and onto $\Sp_{2g_n}(\ZZ/\ell)$. Therefore by Goursat's lemma  (cf. \cite[Lemma~(5.2.1)]{Ribet_Galois_action_on_division_points} for the version we use)
		\[
			H = G' \times_Q \Sp_{2g_n}(\ZZ/\ell)
		\]
		for some common quotient $Q$ of $G'$ and $\Sp_{2g_n}(\ZZ/\ell)$. Moreover, either
		$Q$ is trivial, or $Q$ surjects onto $\PSp_{2g_n}(\ZZ/\ell)$. However, in the latter case
		$H$ would not be normal, as one can see by conjugating by $\{ I \}^{n - 1} \times \Sp_{2g_n}(\ZZ/\ell)$.
		Hence $Q = \{ 1 \}$ and $H = \mc S(\ZZ/\ell)$.
		
		Finally, the only non-abelian composition factors of $\mc S(\ZZ/\ell')$
		are the groups $\mc{PS}_i(\ell')$ for $i = 1, \ldots, n$. But
		$\mc{PS}_i(\ell) \not \cong \mc{PS}_j(\ell')$ for $\ell \neq \ell'$,
		since finite simple groups of Lie type of different characteristic cannot be isomorphic (cf. \cite[Theorem~1.2]{Kantor_Seress_Large_element_orders}). This ends the proof of~(A1).\\
		\emph{Proof of (A0) and (A3):} For $n = 1$, this was proven in~\cite[Lemma 6.2]{Jones_Gl2_reps}. The proof follows, since $\mf s_{\ell}$ is the product $\prod_{i=1}^n \mf{sp}_{2g_i, \ell}$.
	\end{proof}
	\begin{Corollary} \label{cor:jones_2_18}
		Let $G(2) \subset \GSp^{\Delta}(\ZZ/2)$ be any subgroup and 
		let $G := \Phi_{\GSp^{\Delta}, 2}^{-1}(G(2))$. Suppose also that $\PSp_{2g_i}(\ZZ/\ell)$ is not a composition factor of $[G(2), G(2)]$ for every $i = 1, \ldots, n$ and $\ell > 2$. Let $H \subset G$ be any subgroup. Then $[G, G] = [H, H]$
		if and only if $[G(8), G(8)] = [H(8), H(8)]$ and for any prime $\ell > 2$ we have $[G(\ell), G(\ell)] = [H(\ell), H(\ell)]$.
	\end{Corollary}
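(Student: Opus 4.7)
The direction $(\Rightarrow)$ is immediate by reduction modulo $8$ and modulo each odd prime $\ell$. For the converse the plan is to apply the commutator-lifting framework of Jones from~\cite{Jones_Gl2_reps} to $(\mc G, \delta) = (\GSp^{\Delta}, \lambda)$, for which Lemma~\ref{lem:A0-A3} has just verified axioms (A0)--(A3) with $\mc L = \{2\}$. Since $\delta$ factors both $G$ and $H$ through $\wh{\ZZ}^{\times}$, the commutators $[G,G]$ and $[H,H]$ both lie inside $\mc S(\wh{\ZZ}) = \prod_{\ell} \mc S(\ZZ_\ell)$, so the task splits into (i) matching these two groups $\ell$-locally for every $\ell$, and (ii) checking that no information is lost when the $\ell$-adic pieces are reassembled.

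For step (i) at a prime $\ell > 2$, I would upgrade the assumed equality $[G(\ell),G(\ell)] = [H(\ell),H(\ell)]$ to $[G(\ell^k),G(\ell^k)] = [H(\ell^k),H(\ell^k)]$ by induction on $k$. The induction step is the standard Lie-theoretic lifting: axiom (A2) expresses the successive kernel, which by (A0) equals $I + \ell^k \mf s_{\ell}(\ell)$, as the span of commutators $CD - DC$ with $C, D \in \mf g_\ell(\ell)$, while axiom (A3) realizes each such Lie bracket as a genuine group commutator of unipotent lifts sitting inside $[H(\ell^{k+1}),H(\ell^{k+1})]$. For the prime $\ell = 2$, axiom (A3) degenerates at level $2$, which is precisely why the statement starts the induction at level $8$ rather than at level $2$; once the base case $[G(8),G(8)]=[H(8),H(8)]$ is assumed, the same induction runs for $\ell = 2$ from $k = 3$ upward.

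For step (ii), the composition-factor hypothesis on $[G(2),G(2)]$ is what ensures faithful reassembly. Axiom (A1) identifies the non-abelian composition factors of $\mc S(\ZZ/\ell)$ with the groups $\PSp_{2g_i}(\ZZ/\ell)$ and rules out collisions of such groups across different primes. A Goursat-style argument, exactly as in the proof of (A1) inside Lemma~\ref{lem:A0-A3}, then forces the projections of $[H,H]$ onto the individual $\ell$-adic factors to combine into a genuine product, matching the analogous decomposition of $[G,G]$. The assumption that no $\PSp_{2g_i}(\ZZ/\ell)$ is a composition factor of $[G(2),G(2)]$ is exactly what prevents the $2$-adic and odd-$\ell$ contributions from overlapping in a way that would spoil this product decomposition.

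The main obstacle I foresee is precisely step (ii): tracking where in Jones' proof the composition-factor hypothesis enters and verifying that the hypothesis on $[G(2),G(2)]$, rather than on $G(2)$ itself, is still strong enough to exclude diagonal projections between the $2$-part and the odd part. Step (i) is by contrast a routine adaptation of Jones' argument once the axioms have been checked, the only subtlety being the level-$8$ bookkeeping at $\ell = 2$.
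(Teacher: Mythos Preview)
Your overall approach matches the paper's: invoke Jones' commutator-lifting machinery \cite[Theorem~2.18]{Jones_Gl2_reps} for $(\mc G,\delta)=(\GSp^\Delta,\lambda)$, using the axioms (A0)--(A3) verified in Lemma~\ref{lem:A0-A3}. Your ``main obstacle'' is also correctly identified: the paper's proof is almost entirely devoted to locating the places in Jones' argument (labelled (A1-1) and (A1-2) there) where the original non-occurrence hypothesis on $\mc{PS}_i(\ell)$ is invoked, and checking that the weaker composition-factor hypothesis on $[G(2),G(2)]$ suffices at each. The paper does not re-derive the lifting argument you sketch in step~(i); it cites Jones directly and focuses solely on justifying the deviations from his hypotheses.

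There is, however, one issue you have overlooked. Jones' Theorem~2.18 as stated requires $3\in\mc L$, whereas here $\mc L=\{2\}$. The reason Jones imposes this is that, in general, a closed subgroup of $\mc S(\ZZ_3)$ surjecting onto $\mc S(\ZZ/3)$ need not equal $\mc S(\ZZ_3)$; in his notation one only knows $e_3\le 2$, so the base level $m_0$ would have to carry an extra factor at~$3$. The paper closes this gap by establishing separately (Lemma~\ref{lem:product_of_sp}(1), stated just after the corollary) that for $\mc S=\prod_i\Sp_{2g_i}$ one has $e_\ell=1$ for \emph{every} prime~$\ell$, so no extra care at $\ell=3$ is required and $m_0=8$ is legitimate. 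Your claim that ``the only subtlety [in step~(i)] is the level-$8$ bookkeeping at $\ell=2$'' is therefore incomplete: without this additional input the local lifting from level~$\ell$ to level~$\ell^\infty$ is not justified at $\ell=3$.
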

	\begin{proof}
		This is basically \cite[Theorem 2.18]{Jones_Gl2_reps} for $\mc G = \GSp^{\Delta}$, $m = 2$, $\mc L = \{ 2 \}$ and $m_0 = 8$ (cf. \cite[formula~(22)]{Jones_Gl2_reps}). 
		Note however that we modified the assumptions from Jones' article:
		\begin{itemize}
			\item The homomorphisms $\varpi_i$ have different codomains, i.e. instead of considering homomorphisms $\varpi_i : \mc S(\ZZ/\ell) \to \mc{PS}(\ell)$, we consider
			homomorphisms $\varpi_i : \mc S(\ZZ/\ell) \to \mc{PS}_i(\ell)$.
			\item The condition (A1) of \cite{Jones_Gl2_reps} requires
			that $\mc{PS}_i(\ell)$ does not occur in $\mc S(\ZZ/\ell')$ for $\ell \neq \ell'$,
			i.e. that there don't exist subgroups $G_1, G_2$ of $\mc S(\ZZ/\ell')$ such that $G_2 \unlhd G_1$ and $G_1/G_2 \cong \mc{PS}_i(\ell)$. Instead, we assume that
			$\mc{PS}_i(\ell)$ is not a composition factor of $\mc S(\ZZ/\ell')$ for $\ell \neq \ell'$ and of $[G(2), G(2)]$.
			\item We do not require that $3 \in \mc L$. 
		\end{itemize}
		We justify now that these assumptions are obsolete in our case. One easily checks that replacing homomorphisms $\varpi_i : \mc S(\ZZ/\ell) \to \mc{PS}(\ell)$ by $\varpi_i : \mc S(\ZZ/\ell) \to \mc{PS}_i(\ell)$ does not affect the proof.
		
		Next, note that the assumption that $\mc{PS}_i(\ell)$ does not occur in $\mc S(\ZZ/\ell')$ for $\ell' \neq \ell$ is used in \cite{Jones_Gl2_reps} only for two statements:
		\begin{itemize}
			\item[(A1-1)] $\mc{PS}_i(\ell)$ is not a quotient of $\mc S(M')$ for $\ell \not \in \mc L$, $\ell \nmid M'$ (see proof of \cite[Lemma~4.9]{Jones_Gl2_reps}), 
			
			\item[(A1-2)] $\mc{PS}_i(\ell)$ is not a quotient of $[G(m_0), G(m_0)]$ for primes $\ell \not \in \mc L$ (see Subsection 4.4 ibid).
		\end{itemize}
		We justify now that~(A1-1) and~(A1-2) hold in our case. Indeed, in order to check~(A1-1) it suffices to show that $\mc{PS}_i(\ell)$
		is not a composition factor of $\mc S(M')$. If $M' = p_1^{\alpha_1} \cdot \ldots \cdot p_r^{\alpha_r}$, where $p_1, \ldots, p_r$ are primes distinct from $\ell$,
		then $\mc S(M') = \prod_{i = 1}^r \mc S(p_i^{\alpha_i})$. Thus
		the set of the composition factors of $\mc S(M')$ is the union of composition factors
		of $\mc S(p_i^{\alpha_i})$ for $i = 1, \ldots, n$. Moreover, $\ker(\mc S(p_i^{\alpha_i}) \to \mc S(p_i))$
		is a $p_i$-group (as it is a subgroup of $\ker(\Gl_{2g}(\ZZ/p_i^{\alpha_i}) \to \Gl_{2g}(\ZZ/p_i))$, cf.~\cite[ex. 2.23~(b)]{Silverman1994}) and hence it is solvable. Thus the non-abelian composition factors of $\mc S(p_i^{\alpha_i})$ are the non-abelian composition factors of $\mc S(p_i)$. Hence (A1-1) follows from our version of condition~(A1).
		Similarly, if $\mc{PS}_i(\ell)$ is not a composition factor
		of $[G(2), G(2)]$ for any $\ell \neq 2$, then it is also not a composition factor of $[G(8), G(8)]$, as $\ker([G(8), G(8)] \to [G(2), G(2)])$ is a $2$-group. Thus~(A1-2) is also true.\\
		
		Finally, for any prime $\ell$ denote by $e_{\ell}$ the smallest natural number such that for each
		closed subgroup $H \subset \mc S(\ZZ_{\ell})$ satisfying $H(\ell^{e_{\ell}}) = \mc S(\ZZ/\ell^{e_{\ell}})$
		one has $H = \mc S(\ZZ_{\ell})$. The assumption that $3 \in \mc L$ is used only in \cite[Remark 2.16]{Jones_Gl2_reps},
		in order to show that $e_3 \le 2$. In our case $e_{\ell} = 1$ for every~$\ell$,
		see Lemma~\ref{lem:product_of_sp}~(1) below. 
	\end{proof}
	\begin{Lemma} \label{lem:product_of_sp}
	\begin{enumerate}[(1)]
		\item[] 
		\item Let $\ell$ be an arbitrary prime. If $H \le \prod_{i = 1}^n \Sp_{2g_i}(\ZZ_{\ell})$ is a closed subgroup and $H(\ell) = \prod_{i = 1}^n \Sp_{2g_i}(\ZZ/\ell)$, then $H = \prod_{i = 1}^n \Sp_{2g_i}(\ZZ_{\ell})$.
		
		\item Fix $m \ge 1$. If $H \le \prod_{i = 1}^n \mc S \mathscr S_i(2^m)$ is a subgroup
		and $H(2) = \prod_{i = 1}^n S_i$, then $H = \prod_{i = 1}^n \mc S \mathscr S_i(2^m)$.
	\end{enumerate}
		
	\end{Lemma}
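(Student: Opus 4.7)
The plan is to prove both parts by induction through the congruence filtration of the symplectic group. For part (1), I would induct on $k \ge 1$ to show $H(\ell^k) = \prod_{i} \Sp_{2g_i}(\ZZ/\ell^k)$; closedness of $H$ then yields $H = \prod_i \Sp_{2g_i}(\ZZ_\ell)$. The base case is the hypothesis. For the inductive step from $k$ to $k+1$, I identify the kernel $N_k := \ker(\mc S(\ZZ/\ell^{k+1}) \to \mc S(\ZZ/\ell^k))$ with the Lie algebra $\mf s_\ell(\ell) = \prod_i \mf{sp}_{2g_i}(\ZZ/\ell)$ via $X \mapsto I + \ell^k X$; this is well-defined for $k \ge 1$ because the quadratic term $\ell^{2k} X^T \Omega X$ in the symplectic relation vanishes modulo $\ell^{k+1}$. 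It then suffices to establish $N_k \subseteq H(\ell^{k+1})$.

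To construct elements of $N_k$ inside $H$, I would invoke condition (A3) from Lemma~\ref{lem:A0-A3}: $\mf s_\ell(\ell)$ is $\FF_\ell$-spanned by square-zero matrices $u$ with $I + u \in \mc S(\ZZ/\ell)$. Since $H(\ell) = \mc S(\ZZ/\ell)$, for each such $u$ I may select $A \in H$ with $A \equiv I + u \pmod \ell$. The equality $u^2 = 0$ makes the power $A^{\ell^k}$ tractable: its expansion modulo $\ell^{k+1}$ is $I + \ell^k u$ up to correction terms living at strictly lower congruence levels, so by the inductive hypothesis we can multiply $A^{\ell^k}$ by a suitable element of $H$ to cancel those corrections and isolate the pure $I + \ell^k u$ contribution. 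Varying $u$ over the spanning set populates all of $N_k$, completing the induction.

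Part (2) follows by the same strategy specialised to $\ell = 2$, inducting on $k$ from $1$ up to $m$ to establish $H(2^k) = \prod_i \mc S \mathscr S_i(2^k)$; the base case matches the hypothesis because $\mc S \mathscr S_i(2) = \SS_i$, and the kernel of each reduction $\prod_i \mc S \mathscr S_i(2^{k+1}) \to \prod_i \mc S \mathscr S_i(2^k)$ is again $\prod_i \mf{sp}_{2g_i}(\ZZ/2)$ for $k \ge 1$. The main technical obstacle common to both parts is performing the power-map expansion at $\ell = 2$, where $(I + u + 2W)^2$ introduces an extra cross-term $2(uW + Wu)$ beyond $u^2 = 0$; my plan is to absorb this correction via the inductive hypothesis, which furnishes an element of $H$ whose reduction modulo $2^{k+1}$ cancels the residual piece exactly.
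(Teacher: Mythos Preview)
Your approach is genuinely different from the paper's. The paper inducts on the number of factors $n$: the base case $n=1$ is quoted from \cite[Theorem~1]{Landesman_Lifting_Symplectic} for part~(1) and \cite[Theorem~1.1(a)]{Yelton_Lifting_images} for part~(2), and the inductive step is a short Goursat argument (the projection of $H$ to the first $n-1$ factors and the kernel of the projection to the last factor both reduce modulo $\ell$ to the full product, so by induction each equals the full product, forcing $H$ to be full). You instead attempt to climb the congruence filtration directly for the product, in effect reproving the $n=1$ theorems along the way.

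There is a genuine gap in your inductive step from level $k$ to $k+1$. One checks that $A^{\ell^k}\equiv I\pmod{\ell^k}$ automatically, so $A^{\ell^k}$ already lies in $N_k$; writing $A^{\ell^k}\equiv I+\ell^k(u+D)\pmod{\ell^{k+1}}$, the unwanted piece $D$ sits at level $k$, not at a strictly lower level as you assert. To cancel $\ell^k D$ you would need an element of $H$ congruent to $I-\ell^k D\pmod{\ell^{k+1}}$, i.e.\ a prescribed element of $H(\ell^{k+1})\cap N_k$. But the inductive hypothesis $H(\ell^k)=\mc S(\ZZ/\ell^k)$ only lets you specify elements of $H$ modulo $\ell^k$; it gives no control modulo $\ell^{k+1}$, so it cannot supply the correction. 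This is circular: producing prescribed elements of $N_k$ inside $H$ is the goal, not an available tool. The same objection applies verbatim to your proposed absorption of the $\ell=2$ cross-term in part~(2).

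For $\ell\ge 5$ one does have $D=0$ (the term $B^\ell$ is divisible by $\ell^{\lfloor\ell/2\rfloor}\ge\ell^2$, and the intermediate binomial terms by $\ell^2$ as well), so there your scheme goes through with no correction needed. The delicate cases are $\ell\in\{2,3\}$, where the first lifting step genuinely requires an additional idea---for instance commutators together with condition~(A2), or the $\mc S(\ZZ/\ell)$-module structure of $N_k\cong\mf s_\ell(\ell)$ under the adjoint action. These are precisely the subtleties handled in the cited references; the paper's Goursat induction on $n$ avoids redoing that work.
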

	\begin{proof}
		(1) We proceed by induction on $n$. The case $n = 1$ follows by \cite[Theorem 1]{Landesman_Lifting_Symplectic}.
		Suppose that the statement holds for $n - 1$. Denote $G' := \prod_{i = 1}^{n - 1} \Sp_{2g_i}(\ZZ_{\ell})$. Let $H_1 := \im (H \to G')$.
		Then $H_1(\ell) = G'(\ell)$ and hence by the induction hypothesis,
		$H_1 = G'$. Analogously, $H$ surjects onto
		$\Sp_{2g_n}(\ZZ_{\ell})$. Let $N := \ker(H \to \Sp_{2g_n}(\ZZ_{\ell})) \unlhd G'$. Then,
		since $H(\ell) = \prod_{i = 1}^n \Sp_{2g_i}(\ZZ/\ell)$, we have $N(\ell) = G'(\ell)$. Therefore
		by the induction hypothesis we have $N = G'$. One concludes
		using Goursat's lemma.\\
		(2) The case $n = 1$ is~\cite[Theorem~1.1(a)]{Yelton_Lifting_images}. One proceeds by induction, analogously as in~(1).
	\end{proof}	
	Before the proof we need one more algebraic lemma.
	\begin{Lemma} \label{lem:lem_subgroup_surjects_i_j}
		Suppose that $\ell > 2$ is a prime and that $g_1, \ldots, g_n \ge 2$. If $H \le \prod_{i = 1}^n \Sp_{2 g_i}(\ZZ/\ell)$
		is a subgroup surjecting onto $\Sp_{2 g_i}(\ZZ/\ell) \times \Sp_{2 g_j}(\ZZ/\ell)$ for every $1 \le i < j \le n$,
		then $H = \prod_{i = 1}^n \Sp_{2 g_i}(\ZZ/\ell)$.
	\end{Lemma}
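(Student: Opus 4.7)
The plan is to argue by induction on $n$. The base case $n = 1$ is trivial and $n = 2$ is the hypothesis itself. For the inductive step write $G_i := \Sp_{2g_i}(\ZZ/\ell)$, $A := \prod_{i=1}^{n-1} G_i$, and $B := G_n$. The projection $H'$ of $H$ to $A$ inherits a surjection onto every pair $G_i \times G_j$ with $1 \le i < j \le n-1$, so the inductive hypothesis gives $H' = A$; moreover $H$ surjects onto $B$ (from any pair involving the $n$-th factor). Goursat's lemma then produces normal subgroups $K_A \unlhd A$ and $K_B \unlhd B$ together with an isomorphism $\phi : A/K_A \to B/K_B$ such that
\[
H = \{(a, b) \in A \times B : \phi(aK_A) = bK_B\},
\]
and it suffices to prove $K_A = A$.

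Next I extract from the pairwise-surjection hypothesis the key condition $p_i(K_A) = G_i$ for every $i < n$, where $p_i : A \to G_i$ denotes the $i$-th projection. Indeed, fix such $i$ and $c \in G_i$; by assumption there exists $(a, b) \in H$ with $p_i(a) = c$ and $b = 1 \in G_n$, and the defining relation $\phi(aK_A) = K_B$ then forces $a \in K_A$, so $c \in p_i(K_A)$.

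It thus remains to verify a purely group-theoretic claim: any normal subgroup $N$ of a finite product $G_1 \times \ldots \times G_m$ of perfect groups that surjects onto each factor must equal the whole product. For each $i$ and each $a \in G_i$, pick $x = (x_1, \ldots, x_m) \in N$ with $x_i = a$; conjugating $x$ by the element with $b \in G_i$ placed in the $i$-th slot and $1$ elsewhere, and multiplying by $x^{-1}$, produces $(1, \ldots, 1, [b, a], 1, \ldots, 1) \in N$. Hence the $i$-th coordinate subgroup of $N$ contains $[G_i, G_i]$, and perfectness upgrades this to all of $G_i$. Ranging over $i$ yields $N = \prod_i G_i$. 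Applied to $N := K_A$, this shows $K_A = A$, and consequently $H = A \times B = \prod_{i=1}^n G_i$.

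The main obstacle — and the only point where the standing hypotheses $\ell > 2$ and $g_i \ge 2$ enter — is the perfectness input: one needs each $\Sp_{2g_i}(\ZZ/\ell)$ to equal its own commutator subgroup. Under these assumptions $\PSp_{2g_i}(\ZZ/\ell)$ is simple (already noted in the proof of Lemma~\ref{lem:A0-A3}) and $\Sp_{2g_i}(\ZZ/\ell)$ is its central extension by the group of order dividing $2$, from which perfectness follows, so the argument goes through.
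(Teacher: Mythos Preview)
Your proof is correct and shares the same skeleton as the paper's: induction on $n$, Goursat's lemma applied to $A \times B$ with $A = \prod_{i<n} G_i$ and $B = G_n$, and the observation (extracted from the pairwise surjectivity onto $G_i \times G_n$) that the Goursat kernel $K_A \unlhd A$ surjects onto every factor $G_i$ with $i < n$.

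The two arguments diverge only at the final step, namely in showing that a normal subgroup of $\prod_{i<n} G_i$ which surjects onto each factor must be the whole product. The paper invokes the property~(A1) established in Lemma~\ref{lem:A0-A3}: any \emph{proper} normal subgroup of $\prod_i \Sp_{2g_i}(\ZZ/\ell)$ lies in some $\ker\varpi_j$, hence fails to surject onto $G_j$, and one concludes by contraposition. You instead give a direct commutator argument: conjugating an element of $K_A$ by something supported in the $i$-th slot shows that $K_A$ contains $\{1\}^{i-1} \times [G_i,G_i] \times \{1\}^{n-1-i}$, and perfectness of each $G_i$ finishes. Your route is more self-contained (it does not appeal back to the machinery of~(A1)) and in fact proves the slightly more general statement for arbitrary products of perfect groups; the paper's route reuses work already done and avoids having to justify perfectness separately. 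One small remark: your last sentence deduces perfectness of $\Sp_{2g_i}(\ZZ/\ell)$ from simplicity of $\PSp_{2g_i}(\ZZ/\ell)$ plus the central extension, but a central extension of a simple group need not be perfect in general; you should either cite the standard fact that $\Sp_{2g}(\FF_\ell)$ is perfect for $g \ge 2$ and $\ell$ odd, or add the one-line observation that $-I$ is a square in $\Sp_{2g}(\FF_\ell)$ (e.g.\ of $\Omega_g$), which rules out the split case $\Sp \cong \{\pm I\} \times \PSp$ and forces $-I \in [\Sp,\Sp]$.
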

	\begin{proof}
		We prove this by induction on $n$. For $n = 2$ this is immediate. Suppose now that
		the result holds for $n-1$. Let $G' := \prod_{i = 1}^{n - 1} \Sp_{2 g_i}(\ZZ/\ell)$. If $H$ satisfies the hypothesis of the statement, then by the induction hypothesis, $H$ surjects onto $G'$. Therefore we may use Goursat's lemma
		for $H \le G' \times \Sp_{2 g_n}(\ZZ/\ell)$. Let
		\[
		N := \ker(H \to \Sp_{2 g_n}(\ZZ/\ell)) \unlhd G'.
		\]
		Suppose by way of contradiction that $N \neq G'$. Then, by condition (A1), $N \le \ker \varpi_j$ 
		for some $j$. On the other hand, $H$ surjects onto $\Sp_{2 g_j}(\ZZ/\ell) \times \Sp_{2 g_n}(\ZZ/\ell)$.
		Therefore for every $s \in \Sp_{2 g_j}(\ZZ/\ell)$ there exists $h \in H$, which maps
		to $(s, I)$ in $\Sp_{2 g_j}(\ZZ/\ell) \times \Sp_{2 g_n}(\ZZ/\ell)$. By definition $h \in N$
		and $h$ maps to $s$ in $\Sp_{2 g_j}(\ZZ/\ell)$. Thus $N$ surjects onto $\Sp_{2 g_j}(\ZZ/\ell)$.
		The contradiction means that $N = G'$ and hence $H = \prod_{i = 1}^n \Sp_{2 g_i}(\ZZ/\ell)$ by Goursat's lemma.
	\end{proof}
	\begin{proof}[Proof of Proposition~\ref{prop:criterion_based_on_jones}]
		Let $\mc G$ and $\delta$ be as in Lemma~\ref{lem:A0-A3}. 
		Denote $G = \Phi_{\mc G, 2}^{-1}(\SS_1 \times \ldots \times \SS_n)$ and $H = \im \rho_J$.
		Fix a prime $\ell > 2$.
		Since $\lambda \circ \rho_J$ is the cyclotomic character, we have $\mc S (H(\ell)) = \Gal(\QQ(J[\ell])/\QQ(\zeta_{\ell}))$. 
		Note that by Lemma~\ref{lem:image_on_nondiv_by_Mi}, since $M_i \nmid \ell$, $\Gal(\QQ(J_i[\ell])/\QQ(\zeta_{\ell})) \cong \ms S_i(\ell) \cong \Sp_{2g_i}(\ZZ/\ell)$. Moreover, since $\QQ(J_1[\ell]), \ldots, \QQ(J_n[\ell])$ are pairwise linearly disjoint over~$\QQ(\zeta_{\ell})$, $\Gal(\QQ(J[\ell])/\QQ(\zeta_{\ell}))$ surjects onto
		\[
			\Gal(\QQ(J_i[\ell], J_j[\ell])/\QQ(\zeta_{\ell})) \cong \Sp_{2 g_i}(\ZZ/\ell) \times \Sp_{2 g_j}(\ZZ/\ell).
		\]
		for every $i \neq j$. Hence by Lemma~\ref{lem:lem_subgroup_surjects_i_j}
		\[
			\mc S(H(\ell)) = \prod_{i=1}^n \Sp_{2 g_i}(\ZZ/\ell).
		\]
		Therefore, since
		\[
			\mc S(H(\ell)) \le H(\ell) = \Gal(\QQ(J[\ell])/\QQ) \le \mc G(\ZZ/\ell),
		\]
		we have
		\[
			\prod_{i = 1}^n \Sp_{2g_i}(\ZZ/\ell) \le [\mc S (H(\ell)), \mc S (H(\ell))] \le [H(\ell), H(\ell)] \le [\mc G(\ZZ/\ell), \mc G(\ZZ/\ell)]
			= \prod_{i = 1}^n \Sp_{2g_i}(\ZZ/\ell).
		\]
		This yields that $[H(\ell), H(\ell)] = [G(\ell), G(\ell)]$.
		
		Moreover, since the fields $\QQ(J_1[2]), \ldots, \QQ(J_n[2])$ are linearly disjoint over $\QQ$, we have
		\[
			\mc S (H(2)) = \Gal(\QQ(J[2])/\QQ) = \prod_{i = 1}^n \SS_i.
		\]
		This implies by Lemma~\ref{lem:product_of_sp}~(2) that $\mc S (H(8)) \cong \prod_{i = 1}^n \mc S \mathscr S_i(8)$.
		Analogously as before, one obtains $[H(8), H(8)] = [G(8), G(8)]$.
		Moreover, $\PSp_{2 g_i}(\ell)$ is not a composition factor of $[G(2), G(2)] = \prod_{i = 1}^n \AA_i$ for any $i = 1, \ldots, n$ and any $\ell > 2$. Indeed,
		the only composition factors of $\prod_{i = 1}^n \AA_i$ are the simple groups $\AA_1, \ldots, \AA_n$ and
		they are not isomorphic to $\PSp_{2 g_i}(\ell)$ (cf. list of exceptional isomorphisms of simple groups in \cite[\S 3.5]{Conway_Curtis_ATLAS}). Therefore
		by Corollary~\ref{cor:jones_2_18} we obtain $[G, G] = [H, H]$.
		Thus by Lemma~\ref{lem:lower_bound_index} and the third isomorphism theorem
		\begin{align*}
			[G:H] &= [\mc S G : [G, G]] = [\mc S(G(2)) : [G(2), G(2)]]\\
			&= \left[\prod_{i = 1}^n \SS_i : \left[\prod_{i = 1}^n \SS_i, \prod_{i = 1}^n \SS_i \right] \right] = \left[\prod_{i = 1}^n \SS_i : \prod_{i = 1}^n \AA_i \right]\\
			&= 2^n. \qedhere
		\end{align*}
	\end{proof}
	\section{$2$-division fields} \label{sec:8-torsion}
	\noindent Keep the assumptions of Theorem~\ref{thm:main_theorem}. In this section we prove that
	the $2$-division fields are linearly disjoint over $\QQ$. To this end we need to show that
	for any $1 \le i \le n$
	\begin{equation} \label{eqn:intersection_8}
		\QQ(J_i[2]) \cap \QQ(J'_i[2]) = \QQ,
	\end{equation}
	where $J'_i = \prod_{j \neq i} J_j$. Denote the left hand side by $L_2$. Then
	\[
		N_i := \Gal(\QQ(J_i[2])/L_2) \le \Gal(\QQ(J_i[2])/\QQ) = \SS_i.
	\]
	Observe that $\QQ(J_i[2])$ and $\QQ(J_i'[2])$ are Galois over $\QQ$,
	thus their intersection $L_2$ is also Galois over $\QQ$ (cf. \cite[Proposition~21~(1)]{DummitFoote2004}).
	This implies that $N_i = \Gal(\QQ(J_i[2])/L_2)$ is a normal subgroup of $\Gal(\QQ(J_i[2])/\QQ) = \SS_i$.
	Denote by $\AA_i \subset \SS_i$ the alternating group on $\deg f_i$ elements. Note that $N_i \not \subset \AA_i$. Indeed, otherwise we would have
	\[
	\QQ(\sqrt{\Delta_i}) = \QQ(J_i[2])^{\AA_i} \subset \QQ(J_i[2])^{N_i} = L_2 \subset \QQ(J'_i[2])
	\]
	and $\QQ(J'_i[2])/\QQ$ would be ramified over $\ell_i$. But this yields a contradiction,
	since $\QQ(J'_i[2])$ is the compositum of the splitting fields of the polynomials $\{ f_j : j =1, \ldots, n, j \neq i \}$,
	whose discriminants are not divisible by $\ell_i$.
	Thus $N_i$ is a normal subgroup of $\SS_i$ not contained in $\AA_i$. But, since $g_i \ge 2$, 
	$\AA_i$ is simple, which easily implies that $N_i = \SS_i$ and $L_2 = \QQ$. This proves~\eqref{eqn:intersection_8}.

\section{$\ell$-division fields for $\ell > 2$} \label{sec:p-torsion}
	In this section we finish the proof of Theorem~\ref{thm:main_theorem}. We show first that for any prime $\ell > 2$ and fixed $1 \le i < j \le n$
	\begin{equation} \label{eqn:intersection_p}
		\QQ(J_i[\ell]) \cap \QQ(J_j[\ell]) = \QQ(\zeta_{\ell}).
	\end{equation}
	Denote the left hand side by $L_{\ell}$ and let $N_i := \Gal(\QQ(J_i[\ell])/L_{\ell})$.
	Before the proof recall that the condition $\ord_{\ell_i}(\Delta_i) = 1$ implies that $J_i$ has a semistable reduction of toric rank one over $\QQ_{\ell_i}$
	(cf. Corollary~\ref{cor:toric_rk_one}).\\
	
	Suppose that $\ell \neq \ell_i$ (otherwise we may switch $i$ and $j$).
	Let $\mf p$ denote an arbitrary prime of $\QQ(J_i[\ell])$ over $\ell_i$
	and let $I_{\mf p}$ denote the inertia subgroup of $\mf p$ in the extension $\QQ(J_i[\ell])/\QQ$.
	Note that $L_{\ell} \subset \QQ(J_j[\ell])$. Hence, by the N\'{e}ron-Ogg-Shafarevich criterion (cf. \cite[Theorem 1]{Serre_Tate_Good_reduction}),
	$L_{\ell}/\QQ$ is unramified over $\ell_i$. This implies that $I_{\mf p} \subset N_i$.
	Similarly as in Section~\ref{sec:8-torsion}, one shows that $N_i$ is a normal subgroup of $\Gal(\QQ(J_i[\ell])/\QQ(\zeta_{\ell})) \cong \Sp_{2g_i}(\ZZ/\ell)$ (the last isomorphism follows from
	the fact that $J_i$ has maximal Galois image). On the other hand, $\PSp_{2g_i}(\ZZ/\ell)$ is a simple group and hence
	$N_i = \{ I \}, \{ \pm I \}$ or $\Sp_{2g_i}(\ZZ/\ell)$. 
	By the N\'{e}ron-Ogg-Shafarevich criterion, $I_{\mf p} \neq \{ I \}$. 
	Suppose by way of contradiction that $I_{\mf p} = \{ \pm I \}$. Let
	$J_{i, \ell_i}$ be the special fiber of the N\'{e}ron model of $J_i/\QQ$ over $\ell_i$.
	Then by \cite[Lemma~2]{Serre_Tate_Good_reduction}
	\[
		J_{i, \ell_i}[\ell] \cong J_i[\ell]^{I_{\mf p}} \cong 0,
	\]
	since the fixed subspace of the action of $-I$ on an arbitrary $\ZZ/\ell$-vector space is trivial. But the $\ell$-torsion of $J_{1, \ell_i}$ may vanish if and only if
	the reduction is totally unipotent, which leads to a contradiction. Therefore $N_i = \Sp_{2g_i}(\ZZ/\ell)$ and~\eqref{eqn:intersection_p} holds. The equalities~\eqref{eqn:intersection_8} and~\eqref{eqn:intersection_p}
	show that the assumptions of Proposition~\ref{prop:criterion_based_on_jones} are satisfied. Thus~$J$ has
	maximal Galois image, which ends the proof of Theorem~\ref{thm:main_theorem}.

	\section{Proof of Theorem~\ref{thm:division_fields}} \label{sec:corollary}
	Keep the notation of Theorem~\ref{thm:division_fields} and
	denote $H := \im \rho_J$, $G:= \Phi_{\mc G, 2}^{-1}(\SS_1 \times \ldots \times \SS_n)$, $G_i = \Phi_{\mc G, 2}^{-1}(\SS_i)$, $H_i := \im \rho_{J_i}$. Let $\QQ(\zeta_{\infty})$ denote the compositum of all cyclotomic fields and
	let $\QQ(A[\infty])$ be the compositum of all division fields of an abelian variety $A$ defined over $\QQ$.
	Note that since $J_i$ has maximal Galois image, Lemma~\ref{lem:lower_bound_index} implies that $[G_i, G_i] = [H_i, H_i] = \mc SH_i$.
	Moreover, by \cite[Lemma~1, p.~174]{Lang_Trotter_Frobenius_distributions}, $[G, G] = \prod_{i = 1}^n [G_i, G_i]$. Hence
	\begin{align*}
		\Gal(\QQ(J[\infty])/\QQ(\zeta_{\infty})) &= \mc S H = [G, G] = \prod_{i = 1}^n [G_i, G_i]\\
		&= \prod_{i = 1}^n \mc S H_i = \prod_{i = 1}^n \Gal(\QQ(J_i[\infty])/\QQ(\zeta_{\infty})).
	\end{align*}
	Therefore the fields $\QQ(J_1[\infty]), \ldots, \QQ(J_n[\infty])$ are linearly disjoint over $\QQ(\zeta_{\infty})$. Thus
	\begin{equation} \label{eqn:intersection_contained_in_Qinfty}
		\QQ(J_{\ms A}[m_1]) \cap \QQ(J_{\ms B}[m_2]) \subset \QQ(\zeta_{\infty}).
	\end{equation}
	\begin{Lemma} \label{lem:maximal_abelian}
		Keep the assumptions of Theorem~\ref{thm:division_fields}.
		Then for any $m \in \NN$
		\[
			\QQ(J[m]) \cap \QQ(\zeta_{\infty}) = \QQ \left(\zeta_m, \sqrt{\Delta_1^{m + 1}}, \ldots, \sqrt{\Delta_n^{m + 1}} \right).
		\]
	\end{Lemma}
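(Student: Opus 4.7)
My plan is to prove the two containments separately. For $\supseteq$: by the Weil pairing $\QQ(\zeta_m) \subset \QQ(J[m])$, and for each $i$, when $m$ is even one has $\sqrt{\Delta_i} \in \QQ(J_i[2]) \subset \QQ(J[m])$, so $\sqrt{\Delta_i^{m+1}} = \Delta_i^{m/2}\sqrt{\Delta_i} \in \QQ(J[m])$; when $m$ is odd, $\sqrt{\Delta_i^{m+1}} \in \QQ$. All these elements lie in $\QQ(\zeta_\infty)$ by Kronecker--Weber (explicitly, $\sqrt{\Delta_i'} \in \QQ(\zeta_{D_i})$).

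For the inclusion $\subseteq$, I note that $\QQ(J[m]) \cap \QQ(\zeta_\infty)$ is the maximal abelian subextension of $\QQ(J[m])/\QQ$, so its Galois group equals $H(m)^{\mathrm{ab}} = H(m)/[H(m),H(m)]$. The identity $[H,H] = \prod_i \mc SH_i$ established just above in the proof of Theorem~\ref{thm:division_fields}, reduced modulo $m$, yields $[H(m),H(m)] = \prod_i \mc SH_i(m)$. To compute the abelianization I would introduce the homomorphism
\[
\phi \colon H(m) \to (\ZZ/m)^\times \times \{\pm 1\}^n, \qquad A \mapsto (\lambda(A), \varepsilon_1(A), \ldots, \varepsilon_n(A)),
\]
with $\varepsilon_i$ interpreted as trivial when $2 \nmid m$, and show that its image is precisely $\Gal(L'/\QQ)$ for $L' := \QQ(\zeta_m, \sqrt{\Delta_1^{m+1}}, \ldots, \sqrt{\Delta_n^{m+1}})$. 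The defining Serre relation $\varepsilon_i = \left(\frac{\Delta_i'}{\lambda(\cdot)}\right)$ on $\im\rho_{J_i}$ forces the image to satisfy the compatibility $\eta_i = \left(\frac{\Delta_i'}{a}\right)$ precisely when $D_i \mid m$, which matches the analogous relation in $\Gal(L'/\QQ)$ coming from $\sqrt{\Delta_i'} \in \QQ(\zeta_{D_i})$.

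The main obstacle is proving the kernel equality $\ker\phi = \prod_i \mc SH_i(m)$. The inclusion $\supseteq$ is immediate from $\chi_i(1)=1$. For the reverse, given $A = (A_i) \in \ker \phi$, I would lift each $A_i$ to $\tilde A_i \in \im \rho_{J_i}$ (choosing the lift, in the case $m$ odd, so that $\varepsilon_i(\tilde A_i) = 1$) and modify it by an element $B_i \in \ker(\im\rho_{J_i} \to H_i(m))$ of similitude $\lambda(\tilde A_i)^{-1}$. Such a $B_i$ can be constructed explicitly as a diagonal symplectic matrix congruent to $I$ modulo $m$; the nontrivial point is that $B_i$ then lies in $\im\rho_{J_i}$ because the Serre relation on $\tilde A_i$ combined with $\varepsilon_i(\tilde A_i) = 1$ forces $\left(\frac{\Delta_i'}{\lambda(\tilde A_i)^{-1}}\right) = 1$. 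The product $\tilde A_i B_i$ then lies in $\mc SH_i$ and reduces to $A_i$ modulo $m$, proving $A \in \prod_i \mc SH_i(m)$. This kernel equality yields $H(m)^{\mathrm{ab}} \cong \phi(H(m)) \cong \Gal(L'/\QQ)$, so $L' = \QQ(J[m]) \cap \QQ(\zeta_\infty)$ by the Galois correspondence.
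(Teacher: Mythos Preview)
Your argument is correct, but it takes a different route from the paper's. Both proofs identify $[H(m),H(m)]$ with the subgroup of $H(m)$ cut out by the conditions $\lambda=1$ and $\varepsilon_i=1$ (your $\ker\phi$), and then observe that this subgroup is exactly $\Gal(\QQ(J[m])/L')$ by the Galois-theoretic interpretation of $\lambda$ and $\varepsilon_i$. Where you diverge is in how you reach that identification. The paper transfers the computation to the \emph{larger} group $G=\Phi_{\mc G,2}^{-1}(\prod_i\SS_i)$ via the equality $[H,H]=[G,G]$ (which encodes the maximal-image hypothesis); since $G$ is defined by a single level-$2$ congruence, $[G(m),G(m)]=\Phi_{\mc S,2}^{-1}(\prod_i\AA_i)(m)$ is immediate and no lifting is needed. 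You instead work directly with $H$ using the explicit Serre description $H_i=\Phi_{\GSp_{2g_i},M_i}^{-1}(H_i^{\mathrm{Serre}})$, which forces you through the somewhat delicate lifting construction of the $B_i$.

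A small simplification of your write-up: rather than computing $\im\phi$ and matching it against $\Gal(L'/\QQ)$ via the compatibility conditions $\eta_i=\bigl(\tfrac{\Delta_i'}{a}\bigr)$ when $D_i\mid m$, it is cleaner to note directly that $\sigma\in\ker\phi$ iff $\sigma$ fixes $\zeta_m$ and each $\sqrt{\Delta_i^{m+1}}$, i.e.\ $\ker\phi=\Gal(\QQ(J[m])/L')$. Combined with your kernel equality $\ker\phi=[H(m),H(m)]$, this gives $L'$ as the maximal abelian subextension immediately. Also, in your lifting step for $m$ odd, the choice $\varepsilon_i(\tilde A_i)=1$ is indeed always possible: if $D_i\mid m$ it is automatic from the Serre relation and $\lambda(\tilde A_i)\equiv 1\pmod m$, while if $D_i\nmid m$ one can adjust the lift by an element of $\ker(H_i\to H_i(m))$ with $\varepsilon_i=-1$, which exists precisely because $\chi_i$ does not factor through $(\ZZ/m)^\times$.
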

	\begin{proof}
		Keep the above notation.
		Recall that since $J$ has maximal image, we have $[H, H] = [G, G]$ by Lemma~\ref{lem:lower_bound_index}.
		Thus
		\begin{align*}
			\Gal(\QQ(J[m])/\QQ(J[m]) \cap \QQ(\zeta_{\infty})) &= [H(m), H(m)] = [G(m), G(m)]\\
			&= \Phi^{-1}_{\mc S, 2}(\AA_1 \times \ldots \times \AA_n)(m)\\
			&= \mc S (H(m)) \cap \Phi^{-1}_{\mc G, 2}(\AA_1 \times \ldots \times \AA_n)(m)
		\end{align*}
		(the first equality follows from the fact that $\QQ(J[m]) \cap \QQ(\zeta_{\infty})$ is the maximal subfield of $\QQ(J[m])$ which is abelian over~$\QQ$).
		Observe now that since $\lambda \circ \rho_J$ equals the cyclotomic character, the subgroup $\mc S (H(m)) \subset H(m)$ corresponds to 
		the subfield $\QQ(\zeta_m)$ of $\QQ(J[m])$. Moreover, if $m$ is odd, then by the Chinese Remainder Theorem,
		$\Phi^{-1}_{\mc G, 2}(\AA_1 \times \ldots \times \AA_n)$ surjects onto $H(m)$. Thus $\Phi^{-1}_{\mc G, 2}(\AA_1 \times \ldots \times \AA_n)(m) = H(m)$
		corresponds to the subfield $\QQ$ of $\QQ(J[m])$. If $m$ is even, then for any $\sigma \in H(m)$,
		we have
		\begin{align*}
			\sigma \in \Phi^{-1}_{\mc G, 2}(\AA_1 \times \ldots \times \AA_n)(m) \quad &\Leftrightarrow \quad \sigma|_{\QQ(J[2])} \in \AA_1 \times \ldots \times \AA_n\\
			&\Leftrightarrow \quad \sigma(\Delta_i) = \Delta_i \quad \textrm{ for } i = 1, \ldots, n\\
			&\Leftrightarrow \quad \sigma \in \Gal \left(\QQ(J[m])/\QQ\left(\sqrt{\Delta_1}, \ldots, \sqrt{\Delta_n} \right) \right).
		\end{align*}
		This means that $\Phi^{-1}_{\mc G, 2}(\AA_1 \times \ldots \times \AA_n)(m)$ corresponds to the subfield $\QQ\left(\sqrt{\Delta_1}, \ldots, \sqrt{\Delta_n} \right)$.
		Therefore, for an arbitrary $m \in \NN$ the subgroup $\Phi^{-1}_{\mc G, 2}(\AA_1 \times \ldots \times \AA_n)(m) \subset H(m)$ corresponds
		to the subfield $\QQ \left(\sqrt{\Delta_1^{m + 1}}, \ldots, \sqrt{\Delta_n^{m + 1}} \right)$.
		Hence the intersection of subgroups $\mc S (H(m)) \cap \Phi^{-1}_{\mc G, 2}(\AA_1 \times \ldots \times \AA_n)(m)$
		corresponds to the compositum $\QQ(\zeta_m) \cdot \QQ \left(\sqrt{\Delta_1^{m + 1}}, \ldots, \sqrt{\Delta_n^{m + 1}} \right)$, which ends the proof.
	\end{proof}
	In order to finish the proof of Theorem~\ref{thm:division_fields}, note that by~\eqref{eqn:intersection_contained_in_Qinfty} and by Lemma~\ref{lem:maximal_abelian}
	\begin{align*}
		\QQ(J_{\ms A}[m_1]) \cap \QQ(J_{\ms B}[m_2]) &= (\QQ(J_{\ms A}[m_1]) \cap \QQ(\zeta_{\infty})) \cap (\QQ(J_{\ms B}[m_2]) \cap \QQ(\zeta_{\infty}))\\
		&= \QQ(\zeta_{m_1}) \left(\sqrt{\Delta_i^{m_1 + 1}} : i \in \ms A \right) \cap \QQ(\zeta_{m_2})\left(\sqrt{\Delta_i^{m_2 + 1}} : i \in \ms B \right).
	\end{align*}
	\section{Example~\ref{ex:family}} \label{sec:example}
	Keep the notation of Example~\ref{ex:family}. First, we note that for every $t \in \ZZ$, the Jacobian $J_t$ has maximal Galois image. Indeed, by \cite[\S 8]{Anni_Dokchitser_Constructing_hyperelliptic} one sees that $\Sp_{12}(\ZZ/\ell) \subset \im \rho_{J_t}(\ell)$ for every prime $\ell > 2$, and that $\im \rho_{J_t}(2) = S_{14}$ (the symmetric group on fourteen elements).
	Therefore, by \cite[Theorem 1.1(a)]{Yelton_Lifting_images}, $\Phi^{-1}_{\Sp_{12}, 2}(S_{14})(2^{\infty}) \subset \im \rho_{J_t, 2}$.
	The surjectivity of the multiplier map implies that $\Phi^{-1}_{\GSp_{12}, 2}(S_{14})(2^{\infty}) \subset \im \rho_{J_t, 2}$.
	We use \cite[Lemma 4.3]{Landesman_Hyperelliptic_curves_maximal} to conclude that $\rho_{J_t}$ has maximal Galois image.
	
	 Let $\Delta(t) \in \ZZ[t]$ denote the discriminant of the polynomial $F_t$ and let $d \in \ZZ$ be the discriminant
	of $\Delta(t)$. One checks, using SageMath, that $d \neq 0$. We construct inductively an increasing sequence $\{ t_i \}_{i = 1}^{\infty}$ of integers
	and a sequence $\{\ell_i\}_{i = 1}^{\infty}$ of primes such that $\ord_{\ell_i}(\Delta(t_i)) = 1$ and $\ord_{\ell_j}(\Delta(t_i)) = 0$
	for $i \neq j$. Note that we can take $t_1 = 0$ and $\ell_1 = 89$. Suppose now that we constructed $t_1, \ldots, t_{n - 1}$
	and $\ell_1, \ldots, \ell_{n-1}$. Using a theorem of Schur (cf. \cite[\S 20.1, p.~1]{SchurII}) we can find a prime $\ell_n \not \in \{ \ell_1, \ldots, \ell_{n-1} \}$, $\ell_n \nmid d \cdot \Delta(0)$ and $t_n \in \ZZ$ such that $\ell_n|\Delta(t_n)$. If $\ell_n^2 \nmid \Delta(t_n)$ then we are done. Otherwise,
	we may replace $t_n$ by $t_n + \ell_n$. Indeed, suppose to the contrary that $\ell_n^2 | \Delta(t_n), \Delta(t_n + \ell_n)$.
	Then
	\[
		0 \equiv \Delta(t_n + \ell_n) - \Delta(t_n) \equiv \ell_n \cdot \Delta'(t_n) \pmod{\ell_n^2},
	\] 
	where $\Delta'(t)$ denotes the derivative of the polynomial $\Delta(t)$.
	Thus $\ell_n | \Delta(t_n), \Delta'(t_n)$. In other words, $t_n$ is a double root of $\Delta(t) \pmod{\ell_n}$
	and $d \equiv 0 \pmod{\ell_n}$. This yields a contradiction, since $\ell_n \nmid d$. By applying the Chinese Remainder Theorem, we may also enlarge $t_n$ to ensure that $\ell_i \nmid \Delta(t_n)$ for $i < n$. Therefore $J := J_{t_1} \times \ldots \times J_{t_n}$ satisfies the hypotheses of Theorem~\ref{thm:main_theorem}. This ends the induction.\\
	
	Finally, the following code in SageMath
	\begin{python}
		R.<t> = PolynomialRing(QQ)
		Rx.<x> = PolynomialRing(R)
		N = 2201590757511816436065484800
		f = x^(14)+1122976550518058592759939074*x^(13)
		+10247323490706358348644352 * x^(12)+1120184609916242124087443456*x^(11)
		+186398290364786000921886720*x^(10)+1685990245699349559300014080*x^9
		+387529952672653585935499264*x^8+1422826957983635547417870336*x^7
		+585983998625429997308035072*x^6+607434202225985243206107136*x^5
		+1820210247550502007557029888*x^4+533014336994715937945092096*x^3
		+595803405154942945879752704*x^2+1276845913825955586899050496*x
		+1323672381818030813822668800+t*N
		delta = f.discriminant()
		T = [0, 2, 5, 7, 9, 10, 13, 14, 17, 21]
		ell = [89, 983, 839, 43, 31, 167, 103, 40829, 653, 11969]
		for i in range(10):
			for j in range(10):
				if i == j:
					print(delta(t = T[i]).valuation(ell[j]) == 1)
				else:
					print(delta(t = T[i]).valuation(ell[j]) == 0)
	\end{python}
	verifies that the primes
	\[
		(\ell_1, \ldots, \ell_{10}) := (89, 983, 839, 43, 31, 167, 103, 40829, 653, 11969)
	\]
	satisfy the required assumptions for $(t_1, \ldots, t_{10}) := (0, 2, 5, 7, 9, 10, 13, 14, 17, 21)$.
	\bibliography{bibliografia}

\begin{thebibliography}{10}

\bibitem{Anni_Dokchitser_Constructing_hyperelliptic}
S.~Anni and V.~Dokchitser.
\newblock Constructing hyperelliptic curves with surjective {G}alois
  representations.
\newblock {\em Trans. Amer. Math. Soc.}, 373(2):1477--1500, 2020.

\bibitem{BLR1990}
S.~Bosch, W.~L\"utkebohmert, and M.~Raynaud.
\newblock {\em N\'eron models}, volume~21 of {\em Ergebnisse der Mathematik und
  ihrer Grenzgebiete (3) [Results in Mathematics and Related Areas (3)]}.
\newblock Springer-Verlag, Berlin, 1990.

\bibitem{Farfan_Survey_HTMTC}
V.~Cantoral~Farf\'{a}n.
\newblock A survey around the {H}odge, {T}ate and {M}umford-{T}ate conjectures
  for abelian varieties, 2016.
\newblock arXiv:1602.08354.

\bibitem{Childs_Concrete_Higher_algebra}
L.~N. Childs.
\newblock {\em A concrete introduction to higher algebra}.
\newblock Undergraduate Texts in Mathematics. Springer, New York, third
  edition, 2009.

\bibitem{Handbook_elliptic_hyperelliptic}
H.~Cohen, G.~Frey, R.~Avanzi, C.~Doche, T.~Lange, K.~Nguyen, and
  F.~Vercauteren, editors.
\newblock {\em Handbook of elliptic and hyperelliptic curve cryptography}.
\newblock Discrete Mathematics and its Applications (Boca Raton). Chapman \&
  Hall/CRC, Boca Raton, FL, 2006.

\bibitem{Conway_Curtis_ATLAS}
J.~H. Conway, R.~T. Curtis, S.~P. Norton, R.~A. Parker, and R.~A. Wilson.
\newblock {\em {$\Bbb{ATLAS}$} of finite groups}.
\newblock Oxford University Press, Eynsham, 1985.
\newblock Maximal subgroups and ordinary characters for simple groups, With
  computational assistance from J. G. Thackray.

\bibitem{Daniels_Hatley_Ricci_Elliptic_curves}
H.~B. Daniels, J.~Hatley, and J.~Ricci.
\newblock Elliptic curves with maximally disjoint division fields.
\newblock {\em Acta Arith.}, 175(3):211--223, 2016.

\bibitem{Daniels_Lozano_Conicidences}
H.~B. Daniels and \'A. Lozano-Robledo.
\newblock Coincidences of division fields.
\newblock {\em Annales de l'Institut Fourier}, 2022.
\newblock Online first.

\bibitem{DummitFoote2004}
D.~S. Dummit and R.~M. Foote.
\newblock {\em Abstract algebra}.
\newblock John Wiley \& Sons, Inc., Hoboken, NJ, third edition, 2004.

\bibitem{Jones_Almost_all_ECs_are_Serre}
N.~Jones.
\newblock Almost all elliptic curves are {S}erre curves.
\newblock {\em Trans. Amer. Math. Soc.}, 362(3):1547--1570, 2010.

\bibitem{Jones_Pairs_of_ECs}
N.~Jones.
\newblock Pairs of elliptic curves with maximal {G}alois representations.
\newblock {\em J. Number Theory}, 133(10):3381--3393, 2013.

\bibitem{Jones_Gl2_reps}
N.~Jones.
\newblock {${\rm GL}_2$}-representations with maximal image.
\newblock {\em Math. Res. Lett.}, 22(3):803--839, 2015.

\bibitem{Kantor_Seress_Large_element_orders}
W.~M. Kantor and \'{A}. Seress.
\newblock Large element orders and the characteristic of {L}ie-type simple
  groups.
\newblock {\em J. Algebra}, 322(3):802--832, 2009.

\bibitem{Kowalski_Some_local_global}
E.~Kowalski.
\newblock Some local-global applications of {K}ummer theory.
\newblock {\em Manuscripta Math.}, 111(1):105--139, 2003.

\bibitem{Landesman_Lombardo_Surjectivity_Rational_Families}
A.~Landesman, A.~Swaminathan, J.~Tao, and Y.~Xu.
\newblock Surjectivity of {G}alois representations in rational families of
  abelian varieties.
\newblock {\em Algebra Number Theory}, 13(5):995--1038, 2019.
\newblock With an appendix by D. Lombardo.

\bibitem{Landesman_Lifting_Symplectic}
A.~Landesman, A.~A. Swaminathan, J.~Tao, and Y.~Xu.
\newblock Lifting subgroups of symplectic groups over {$\Bbb{Z}/\ell\Bbb{Z}$}.
\newblock {\em Res. Number Theory}, 3:Paper No. 14, 12, 2017.

\bibitem{Landesman_Hyperelliptic_curves_maximal}
A.~Landesman, A.~A. Swaminathan, J.~Tao, and Y.~Xu.
\newblock Hyperelliptic curves with maximal {G}alois action on the torsion
  points of their {J}acobians.
\newblock {\em Indiana Univ. Math. J.}, 69(7):2461--2492, 2020.

\bibitem{Lang_Trotter_Frobenius_distributions}
S.~Lang and H.~Trotter.
\newblock {\em Frobenius distributions in {${\rm GL}_{2}$}-extensions}.
\newblock Lecture Notes in Mathematics, Vol. 504. Springer-Verlag, Berlin-New
  York, 1976.

\bibitem{Lombardo_Non-isogenous}
D.~Lombardo.
\newblock Non-isogenous abelian varieties sharing the same division fields.
\newblock {\em Trans. Amer. Math. Soc.}, 376(4):2615--2640, 2023.

\bibitem{OMeara_Symplectic_groups}
O.~T. O'Meara.
\newblock {\em Symplectic groups}.
\newblock Mathematical Surveys, No. 16. American Mathematical Society,
  Providence, R.I., 1978.

\bibitem{Ribet_Galois_action_on_division_points}
K.~A. Ribet.
\newblock Galois action on division points of {A}belian varieties with real
  multiplications.
\newblock {\em Amer. J. Math.}, 98(3):751--804, 1976.

\bibitem{SchurII}
I.~Schur.
\newblock {\em Gesammelte {A}bhandlungen. {B}and {II}}.
\newblock Springer-Verlag, Berlin-New York, 1973.
\newblock Herausgegeben von Alfred Brauer und Hans Rohrbach.

\bibitem{Serre_proprietes}
J.-P. Serre.
\newblock Propri\'{e}t\'{e}s galoisiennes des points d'ordre fini des courbes
  elliptiques.
\newblock {\em Invent. Math.}, 15(4):259--331, 1972.

\bibitem{SerreOeurvesIV}
J.-P. Serre.
\newblock {\em Oeuvres/{C}ollected papers. {IV}. 1985--1998}.
\newblock Springer Collected Works in Mathematics. Springer, Heidelberg, 2013.
\newblock Reprint of the 2000 edition.

\bibitem{Serre_Tate_Good_reduction}
J.-P. Serre and J.~Tate.
\newblock Good reduction of abelian varieties.
\newblock {\em Ann. of Math. (2)}, 88:492--517, 1968.

\bibitem{Silverman1994}
J.~H. Silverman.
\newblock {\em {Advanced topics in the arithmetic of elliptic curves}}, volume
  151 of {\em {Graduate Texts in Mathematics}}.
\newblock Springer-Verlag, New York, 1994.

\bibitem{van_bommel_2014}
R.~van Bommel.
\newblock {\em Almost all hyperelliptic Jacobians have a bad semi-abelian
  prime}.
\newblock PhD thesis, 2014.

\bibitem{Yelton_Lifting_images}
J.~Yelton.
\newblock Lifting images of standard representations of symmetric groups.
\newblock {\em Manuscripta Math.}, May 2021.

\end{thebibliography}
\end{document}